\DeclareMathOperator{\Real}{Re}
\DeclareMathOperator{\Imag}{Im}
\newcommand{\eps}{\varepsilon}
\newcommand{\NN}{\ensuremath{\mathbb{N}}}
\newcommand{\RR}{\ensuremath{\mathbb{R}}}
\newcommand{\CC}{\ensuremath{\mathbb{C}}}
\newcommand{\FF}{\ensuremath{\mathbb{F}}}
\newcommand{\GG}{\ensuremath{\mathbb{G}}}
\newcommand{\LL}{\ensuremath{\mathbb{L}}}
\newcommand{\AAA}{{\mathcal A}}
\newcommand{\CCC}{{\mathcal C}}
\newcommand{\EEE}{{\mathcal E}}
\newcommand{\FFF}{{\mathcal F}}
\newcommand{\JJJ}{{\mathcal J}}
\newcommand{\PPP}{{\mathcal P}}
\newcommand{\HHHH}{{\mathscr H}}
\numberwithin{equation}{section}
\newtheorem{defn}{Definition}[section]   
\newaliascnt{lem}{defn}
\newtheorem{lem}[lem]{Lemma}
\newaliascnt{prop}{defn}
\newtheorem{prop}[prop]{Proposition}
\newaliascnt{thm}{defn}
\newtheorem{thm}[thm]{Theorem}
\newaliascnt{cor}{defn}
\newtheorem{cor}[cor]{Corollary}
\theoremstyle{definition}
\newaliascnt{expl}{defn}
\newaliascnt{expls}{defn}
\newaliascnt{rem}{defn}
\newaliascnt{rems}{defn}
\begin{document}

\title{
Predictable projections of conformal stochastic integrals: an application to Hermite series and to Widder's representation
}

\author{
Matteo Casserini and Freddy Delbaen\thanks{e-mail: \texttt{matteo.casserini@math.ethz.ch}, \texttt{delbaen@math.ethz.ch}. The authors would like to thank Marc Yor (Universit\'e Pierre et Marie Curie, Paris) and Larbi Alili (Warwick University) for helpful discussions and comments.}
}

\affil{
\small{Department of Mathematics, ETH Z\"urich, Switzerland}
}

\date{December 23, 2011}
\maketitle

\begin{abstract}
In this article, we study predictable projections of stochastic integrals with respect to the conformal Brownian motion, extending the connection between powers of the conformal Brownian motion and the corresponding Hermite polynomials. As a consequence of this result, we then investigate the relation between analytic functions and $L^p$-convergent series of Hermite polynomials. Finally, our results are applied to Widder's representation for a class of Brownian martingales, retrieving a characterization for the moments of Widder's measure.

\medskip
\noindent \textit{Keywords.} Predictable projections, stochastic integrals, conformal Brownian motion, Hermite polynomials, Brownian martingales, Widder's representation

\medskip
\noindent \textit{Mathematics Subject Classification (2010).} 60H05, 60H30, 60G46, 33C45
\end{abstract}

\section{Introduction}

The purpose of this article is to introduce some complexification techniques for stochastic processes, that allow to consider real-valued processes as appropriate projections of corresponding complex-valued, conformal stochastic processes. As an application of our complexification techniques, we derive a characterization of Widder's integral representation for Brownian martingales, which is obtained by adapting to the probabilistic setting a classical result for the heat equation \cite{Wid}.

We start by studying predictable projections in a conformal Brownian setting. Conformal martingales have been  introduced by Getoor and Sharpe \cite{GetSha} to prove the duality between the Hardy space $H^1$ and the space $BMO$ in the martingale setting: conformal martingales later played an important role in the probabilistic study of analytic functions as well as in the derivation of the conformal invariance of Brownian motion (see for instance the survey article \cite{Dav}).

While stochastic integration with respect to conformal martingales is particularly interesting because of the properties of the complex plane, to our knowledge there has not been any attempt to introduce a notion of projection of such integrals on the real line. As a first step in this direction, we consider the predictable projection on the real component of a conformal Brownian motion. It turns out that such a projection behaves well under integration, and in particular powers of the conformal Brownian motion project onto the corresponding Hermite polynomials. Such a remarkable property stresses once more the importance of Hermite polynomials in stochastic analysis (which is due especially to their close relation with iterated stochastic integrals and the Wiener chaos decomposition, see for instance Nualart \cite{Nua}), and it motivates the subsequent study of series of Hermite polynomials, allowing us to obtain, in a stochastic setting, interesting connections to analytic functions.

In the second part, the techniques derived previously are applied to a wide class of Brownian martingales, obtaining a further characterization of Widder's representation. We recall that, by the results of Widder \cite{Wid}, any positive solution of the heat equation can be rewritten in terms of a Laplace-Stieltjes integral with respect to some measure $\mu$, which however remains undetermined. We will show that the quadratic exponential moments of $\mu$ can be characterized by applying our results on series of Hermite polynomials and related power series of conformal Brownian motion. Moreover, we obtain a relation between Widder's representation and a particular class of analytic functions.

The article is organized as follows. In Section $2$, we recall the notion of predictable projections of stochastic processes and show how stochastic integrals with respect to the conformal Brownian motion are projected on the real line. Then, in Section $3$ we derive $L^p$-convergence properties for series of Hermite polynomials from well known $L^p$-estimates on the Wiener chaos. Section $4$ is dedicated to the presentation in a purely probabilistic setting of Widder's representation result as well as its extension to $L^1$-bounded martingales. Finally, we derive in Section $5$ the characterization of the moments of Widder's measure $\mu$, as well as the aforementioned connection to analytic functions.

\section{Predictable projections of stochastic integrals}

We begin by introducing some notation. Let $(\Omega, \FFF, P)$ be a complete probability space, and assume that $X$, $Y$ are two independent, $d$-dimensional Brownian motions on $(\Omega, \FFF, P)$. We denote by $Z$ the conformal $d$-dimensional Brownian motion given by $Z=X + i \, Y$. Furthermore, let $\FF=(\FFF_t)_{t \ge 0}$ be the augmented filtration generated by $Z$, and let $\FF^X=(\FFF_t^X)_{t \ge 0}$, $\FF^Y=(\FFF_t^Y)_{t \ge 0}$ denote the filtrations generated by $X$, respectively $Y$, and augmented by the $P$-nullsets from $\FF$. Unless otherwise stated, we will always define stochastic integrals with respect to the filtered probability space $(\Omega, \FFF, \FF, P)$. We denote by $ b \LL$ the space of all adapted processes with bounded càglàd paths, and by $b \PPP$ the space of all bounded predictable processes. Moreover, we define as usual
\begin{align*}
\HHHH^2(Z):=\bigg\{ H: \Omega \times [0,T] \to \CC^d \ \bigg| \ &H \text{ predictable with respect to } \PPP^{\FF}, \text{ and } \\
&\|H\|_{\HHHH^2(Z)}:=E\left[\int_0^\infty |H_t|^2 d t\right] < \infty \bigg\},
\end{align*}
where $|\cdot|$ is the Euclidean norm, and similarly for $\HHHH^2(X)$. Finally, we denote by $\Pi^X$ the orthogonal projection from $\HHHH^2(Z)$ onto the space $\HHHH^2(X)$. We shortly recall the definition of the predictable projection of a measurable process:

\begin{defn}
Let $\GG$ be a filtration on $(\Omega, \FFF, P)$, and let $\PPP^{\GG}$ denote the predictable $\sigma$-field with respect to $\GG$. Then, for a measurable process $L$ such that $L$ is positive or bounded, there exists a unique process $\widetilde{L}$, measurable with respect to $\PPP^{\GG}$ such that, for every predictable $\GG$-stopping time $T$,
\[
E[L_T|{\mathcal G}_{T-}]=\widetilde{L}_T \; P\text{-a.s. on } \{T < \infty\}.
\] 
$\widetilde{L}$ is then called the predictable projection of $L$ on $\GG$.

\end{defn}

An easy calculation shows that, for stochastic processes $H$ that are optional for $\FF^Z$ and such that $E[\int_0^T |H_u|^2\,du]<\infty$, the predictable projection on $\FF^X$ is a version of the projection $\Pi^X(H)$. There is a slight distinction that shows that the predictable projection is a finer object than the orthogonal projection.  Indeed, the predictable projection is a process, defined up to evanescent sets, whereas the orthogonal projection is a class of random variables defined up to sets of $dP\times dt$ measure zero.

In the following, we will concentrate our attention on the predictable projection on $\FF^X$: because of the properties of the Brownian motion $X$, this actually coincides with the optional projection on $\FF^X$. To simplify our notation, the predictable projection of $L$ on $\FF^X$ will be denoted by $L^{\PPP^X}$.

We can now prove our first main result: the predictable projection on $\FF^X$ maps stochastic integrals with respect to $Z$ onto stochastic integrals with respect to $X$. Moreover, we also obtain a relation between the integrand processes.

\begin{thm}\label{predproj}
Let $H$ be a process such that $H \in \HHHH^2(Z)$. Then, the predictable projections $\left(\int H d Z\right)^{\PPP^X}$ exists, and
\[
\left(\int H d Z\right)^{\PPP^X}_t=\int_0^t \Pi^X(H) \, d X \; P\text{-a.s. for all } t \ge 0.
\]
\end{thm}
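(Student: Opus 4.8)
The plan is to realise the predictable projection of $\int H\,dZ$ as a stochastic integral against $X$, and then to identify the resulting integrand with $\Pi^X(H)$. Put $N:=\int H\,dZ$. Since $H\in\HHHH^2(Z)$, the process $N$ is a continuous, $\CC$-valued, square-integrable $\FF$-martingale, closed by $N_\infty:=\int_0^\infty H\,dZ$, which exists in $L^2$ by the It\^o isometry. Hence $M_t:=E[N_t\mid\FFF_t^X]=E[N_\infty\mid\FFF_t^X]$ is a uniformly integrable $\FF^X$-martingale which, $\FF^X$ being generated by a Brownian motion, admits a continuous modification; this modification is null at $0$ and, as noted in the discussion preceding the statement, it is a version of the predictable projection $N^{\PPP^X}$. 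This already establishes existence, so it remains to compute $N^{\PPP^X}$.

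I would then apply the martingale representation theorem for the Brownian motion $X$ — legitimate because augmenting the filtration generated by $X$ by the $P$-nullsets of $\FF$ leaves $X$ a Brownian motion — to write $N^{\PPP^X}_t=\int_0^t K\,dX$ for a unique $K\in\HHHH^2(X)$, so that also $N^{\PPP^X}_\infty=\int_0^\infty K\,dX$. Since $K$ and $\Pi^X(H)$ both lie in the Hilbert space $\HHHH^2(X)$, it suffices to check $\langle G,K\rangle_{\HHHH^2(Z)}=\langle G,\Pi^X(H)\rangle_{\HHHH^2(Z)}$ for every $G\in\HHHH^2(X)$. Using that $\int G\,dX$ is $\FFF_\infty^X$-measurable together with the tower property,
\[
\langle G,K\rangle_{\HHHH^2(Z)}=E\Big[\,\overline{{\textstyle\int}G\,dX}\;{\textstyle\int}K\,dX\,\Big]=E\Big[\,\overline{{\textstyle\int}G\,dX}\;N^{\PPP^X}_\infty\,\Big]=E\Big[\,\overline{{\textstyle\int}G\,dX}\;N_\infty\,\Big]=E\Big[\,\overline{{\textstyle\int}G\,dX}\;\big({\textstyle\int}H\,dX+i{\textstyle\int}H\,dY\big)\,\Big].
\]
By the It\^o isometry for $X$ the first resulting term equals $\langle G,H\rangle_{\HHHH^2(Z)}$, which in turn equals $\langle G,\Pi^X(H)\rangle_{\HHHH^2(Z)}$ because $G\in\HHHH^2(X)$ and $\Pi^X$ is the orthogonal projection onto $\HHHH^2(X)$. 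Hence the whole statement reduces to showing that the cross term $E[\,\overline{\int G\,dX}\,\int H\,dY\,]$ vanishes.

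This vanishing is where the independence built into the conformal Brownian motion enters, and I expect it to be the step demanding the most care. I would prove it by approximating $H$ in $\HHHH^2(Z)$ by simple integrands $\sum_j\xi_j\mathbf{1}_{(t_j,t_{j+1}]}$ with $\xi_j$ bounded and $\FFF_{t_j}$-measurable, which reduces the cross term to a finite sum of expressions $E[\,\overline{\int G\,dX}\,\xi_j\cdot(Y_{t_{j+1}}-Y_{t_j})\,]$; in each of these, $\overline{\int G\,dX}\cdot\xi_j$ is a functional of $X$ and of $(X,Y)$ on $[0,t_j]$, whereas $Y_{t_{j+1}}-Y_{t_j}$ is centred and independent of that data, since $Y$ has independent increments and is independent of $X$, so every such term is $0$; a Cauchy--Schwarz estimate handles the passage to the limit. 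Assembling the three steps gives $K=\Pi^X(H)$, hence the asserted identity for all $t$ simultaneously since both sides are continuous.

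The genuine obstacles are, in order of appearance: making sure that the predictable projection of the (only locally bounded) continuous martingale $N$ is well defined and coincides with the optional projection $E[N_t\mid\FFF_t^X]$, which is essentially a matter of quoting the discussion above correctly; the bookkeeping with complex conjugates in the It\^o isometry; and the rigorous justification of the independence argument for the cross term through the limiting procedure. A route avoiding the representation theorem would be to prove the identity first for a single block $H=\xi\mathbf{1}_{(s,t]}$ by computing $E[\xi\cdot(Z_u-Z_s)\mid\FFF_u^X]$ directly — splitting $\xi=E[\xi\mid\FFF_s^X]+(\xi-E[\xi\mid\FFF_s^X])$ and using the same two facts, the independence of $X$ and $Y$ and the independence of future increments — and then extending to all of $\HHHH^2(Z)$ by linearity and the $L^2$-contractivity of the projection (upgrading $L^2$-convergence of the associated $\FF^X$-martingales to uniform-in-$t$ convergence along a subsequence via Doob's inequality); this works equally well, with the verification and the limiting step carrying comparable weight.
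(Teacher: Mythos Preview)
Your argument is correct and takes a genuinely different route from the paper's. The paper proceeds by direct computation on Riemann sums: it first restricts to $H\in b\LL$, writes $\int_0^t H\,dZ$ as an $L^2$-limit of sums $\sum H_{t_i}(Z_{t_{i+1}}-Z_{t_i})$, and computes $E[H_{t_i}(X_{t_{i+1}}-X_{t_i})\mid\FFF^X_t]$ and $E[H_{t_i}(Y_{t_{i+1}}-Y_{t_i})\mid\FFF^X_t]$ term by term. The $Y$-term vanishes by the same independence you use for the cross term, while for the $X$-term the paper proves the identity $E[H_{t_i}\mid\FFF^X_t]=E[H_{t_i}\mid\FFF^X_{t_i}]$ via a Dynkin class argument on the generating system $\CCC_{t_i}=\{A\cap B:A\in\FFF^X_{t_i},\,B\in\FFF^Y_{t_i}\}$; the result is then extended from $b\LL$ to $b\PPP$ to $\HHHH^2(Z)$ by monotone-class and convergence theorems. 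Your approach bypasses the Riemann sums and the Dynkin step entirely: the martingale representation theorem produces $K$ directly, and the orthogonal-projection identity $\langle G,H\rangle=\langle G,\Pi^X(H)\rangle$ absorbs what the paper does by hand for the $X$-part. Your route is cleaner and more structural; the paper's is more elementary (no representation theorem needed) and makes the pointwise identification $E[H_{t_i}\mid\FFF^X_t]=\Pi^X(H)_{t_i}$ explicit. The alternative you sketch at the end --- proving the identity for a single block $\xi\,\mathbf{1}_{(s,t]}$ and extending --- is essentially the paper's strategy.
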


\begin{proof}
First of all, we observe that both stochastic integrals can be realized on the filtered probability space $(\Omega, \FFF, \FF, P)$, as $X$ and $Z$ are both continuous martingales on it. Moreover, we notice that the existence of $\left(\int H d Z\right)^{\PPP^X}$ is a consequence of classical results on filtration shrinkage, which can be found for instance in \cite{ProBook}.

We first assume that $H \in b \LL$. Fix $t \ge 0$, and consider a sequence $(\pi^n)_{n \in \NN}$ of partitions of $[0,t]$ such that $ |\pi^n| \to 0$. Because of classical convergence results in stochastic analysis (see \cite{ProBook}), we have that
\[
\int_0^t H_s d Z_s=\lim_{n \to \infty} \sum_{t_i \in \pi^n} H_{t_i} 
(Z_{t_{i+1}} -Z_{t_i}) \quad \text{in } \HHHH^2,
\]
and hence there is a subsequence $(\pi^{n_k})_{k \in \NN}$ so that $\sum_{t_i \in \pi^{n_k}} H_{t_i} (Z_{t_{i+1}} -Z_{t_i})$ converges to $\int_0^t H_s d Z_s$ $P$-a.s. as $k \to \infty$. Since $H$ is bounded, the bounded convergence theorem gives that
\begin{multline*}
\left(\int H d Z\right)^{\PPP^X}_t=E\bigg[\int_0^t H_s d Z_s\bigg|\FFF^X_t\bigg]
=\lim_{k \to \infty} \sum_{t_i \in \pi^{n_k}} E[H_{t_i} (Z_{t_{i+1}} -Z_{t_i})|\FFF^X_t] \\
=\lim_{k \to \infty} \sum_{t_i \in \pi^{n_k}} \left(E[H_{t_i} (X_{t_{i+1}} -X_{t_i})|\FFF^X_t] + i \,  E[H_{t_i} (Y_{t_{i+1}} -Y_{t_i})|\FFF^X_t] \right).
\end{multline*}
We compute the first term. Consider for $t \ge 0$ the class 
\[\CCC_t:=\{C \in \FFF \ | \ \exists \, A \in \FFF^X_t, B \in \FFF^Y_t \text { such that } C=A \cap B\},
\] 
which is stable under intersection. Because of the independence of $X$ and $Y$, we can compute that, for all $F \in L^1(\FFF)$ and $C=A \cap B \in \CCC_{t_i}$,
\begin{align*}
E\big[E[F |\FFF^X_t] \mathds{1}_C\big]&=E\big[E[F |\FFF^X_t] \mathds{1}_A \mathds{1}_B\big] \\
&=E\big[E[F |\FFF^X_t] \mathds{1}_A\big] E[\mathds{1}_B] \\
&=E\big[E[F|\FFF^X_{t_i}] \mathds{1}_A \big] E[\mathds{1}_B]= E\big[E[F |\FFF^X_{t_i}] \mathds{1}_C\big].
\end{align*}
Therefore, by the Dynkin class theorem,
\[
E\big[E[F |\FFF^X_t]\big|\FFF^Z_{t_i}\big]=E[F |\FFF^X_{t_i}]
\]
since $\FFF^Z_{t_i}=\sigma(\CCC_{t_i})$. This implies that $E[H_{t_i}|\FFF^X_t]=E\big[E[H_{t_i} |\FFF^Z_{t_i}]\big|\FFF^X_t\big]=E[H_{t_i} |\FFF^X_{t_i}]=\Pi^X(H)_{t_i}$, and therefore
\[
E[H_{t_i} (X_{t_{i+1}} -X_{t_i})|\FFF^X_t]
=E[H_{t_i}|\FFF^X_t] (X_{t_{i+1}} -X_{t_i})
=\Pi^X(H)_{t_i} (X_{t_{i+1}} -X_{t_i}).
\]
On the other hand, we have for the second term that
\begin{align*}
E[H_{t_i} (Y_{t_{i+1}} -Y_{t_i})|\FFF^X_t]
&=E[E[H_{t_i} (Y_{t_{i+1}} -Y_{t_i})|\FFF^Y_{t_i}\vee \FFF^X_t]|\FFF^X_t] \\
&=E[H_{t_i} E[(Y_{t_{i+1}} -Y_{t_i})|\FFF^Y_{t_i}\vee \FFF^X_t]|\FFF^X_t]=0,
\end{align*}
and we can hence conclude that
\[
\left(\int H d Z\right)^{\PPP^X}_t=\lim_{k \to \infty} \sum_{t_i \in \pi^{n_k}} \Pi^X(H)_{t_i} (X_{t_{i+1}} -X_{t_i})
=\int_0^t \Pi^X(H) \, d X,
\]
since $\Pi^X(H)$ remains bounded and left continuous by the general theory of stochastic processes. This proves the claim for $H \in b \LL$. The result is extended first to $H \in b \PPP$ and then to $H \in \HHHH^2(Z)$ by applying respectively the bounded and the monotone convergence theorem. As this procedure is fairly standard, the details are left to the reader.
\end{proof}

In particular, if the predictable projection $H^{\PPP^X}$ exists for $H \in \HHHH^2(Z)$, then $\left(\int H d Z\right)^{\PPP^X}_t=\int_0^t H^{\PPP^X} d X$ $P$-a.s. for all $t \ge 0$. We end this section by observing that \autoref{predproj} immediately gives us an explicit expression for the predictable projection on $\FF^X$ of two important classes of stochastic processes.
\begin{cor} \label{powersonherm} For any $t \ge 0$, the following assertions hold:
\begin{enumerate}
\item $\left(e^{a \cdot Z_t}\right)^{\PPP^X}=\EEE(a \cdot X)_t$ $P$-a.s. for all $a \in \RR^d$ and $t \ge 0$.

\item Let $\alpha=(\alpha_1, \cdots, \alpha_d) \in \NN^d$ denote a multi-index. Then, for $t \ge 0$,
\[
(Z_t^\alpha)^{\PPP^X} =H_\alpha (t,X_t) \; P\text{-a.s.},
\] 
where $z^\alpha:=\prod_{i=1}^d z^{\alpha_i}$ and $H_\alpha$ denotes the $d$-dimensional generalized Hermite polynomial of degree $\alpha$, defined by
\[
H_\alpha (t,X_t):=\prod_{i=1}^d H_{\alpha_i} (t,X^i_t).
\]
In other words, the powers of the conformal Brownian motion project onto the corresponding Hermite polynomials.
\end{enumerate}
\end{cor}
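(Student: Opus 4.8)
\textit{Proof proposal.}
The guiding principle is that conformality of $Z$ makes all coordinate brackets $\langle Z^j,Z^k\rangle$ vanish, so that $e^{a\cdot Z}$ and every power $Z^\alpha$ are \emph{driftless} functionals of $Z$, i.e.\ pure stochastic integrals against $Z$; \autoref{predproj} then converts these into the corresponding integrals against $X$, which are identified by hand. For part~(i), fix $a\in\RR^d$ and a horizon $T$. Since $\langle a\cdot Z\rangle\equiv 0$, It\^o's formula gives $e^{a\cdot Z_t}=1+\int_0^t e^{a\cdot Z_s}\,(a\cdot dZ_s)$, and I would first check that $H_s:=a\,e^{a\cdot Z_s}$ lies in $\HHHH^2(Z)$ on $[0,T]$, which holds because $|H_s|=|a|\,e^{a\cdot X_s}$ and $E[\int_0^T e^{2a\cdot X_s}\,ds]<\infty$. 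Applying \autoref{predproj} (and the remark preceding it, which lets me replace $\Pi^X(H)$ by the predictable projection of the continuous, $\FF$-adapted process $H$) yields that $\phi_t:=(e^{a\cdot Z_t})^{\PPP^X}$ satisfies the linear equation $\phi_t=1+\int_0^t\phi_s\,(a\cdot dX_s)$, whose unique solution is $\EEE(a\cdot X)_t$. Equivalently, and more quickly, one can simply compute $(e^{a\cdot Z_t})^{\PPP^X}=E[e^{a\cdot Z_t}\mid\FFF^X_t]=e^{a\cdot X_t}\,E[e^{i\,a\cdot Y_t}]=e^{a\cdot X_t-\frac12|a|^2 t}=\EEE(a\cdot X)_t$, using that $Y_t$ is independent of $\FFF^X_t$ with an $N(0,tI)$ law.

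For part~(ii) the plan is to bootstrap from (i) via generating functions. Expanding, $e^{a\cdot Z_t}=\sum_\alpha\frac{a^\alpha}{\alpha!}Z_t^\alpha$ and, factoring over coordinates, $\EEE(a\cdot X)_t=\prod_{j=1}^d e^{a_j X^j_t-\frac12 a_j^2 t}=\sum_\alpha\frac{a^\alpha}{\alpha!}H_\alpha(t,X_t)$ by the generating-function identity for the one-dimensional Hermite polynomials. Taking predictable projections on $\FF^X$ (i.e.\ conditional expectations given $\FFF^X_t$) term by term — which I would justify by dominated convergence with dominating function $\sum_\alpha\frac{|a|^{|\alpha|}}{\alpha!}|Z_t^\alpha|\le\prod_j e^{|a_j|\,|Z^j_t|}\in L^1$ — part~(i) gives $\sum_\alpha\frac{a^\alpha}{\alpha!}(Z_t^\alpha)^{\PPP^X}=\sum_\alpha\frac{a^\alpha}{\alpha!}H_\alpha(t,X_t)$ in $L^1$ for every $a\in\RR^d$; replacing $a$ by $sa$, matching powers of $s$, and then matching the resulting polynomial coefficients in $a$ identifies $(Z_t^\alpha)^{\PPP^X}=H_\alpha(t,X_t)$ $P$-a.s. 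A self-contained alternative that uses the theorem more directly is an induction on $|\alpha|$: It\^o's product rule together with $\langle Z^j,Z^k\rangle\equiv 0$ gives $Z_t^\alpha=\sum_{j=1}^d\alpha_j\int_0^t Z_s^{\alpha-e_j}\,dZ_s^j$ for $\alpha\neq 0$, so \autoref{predproj} and the inductive hypothesis yield $(Z_t^\alpha)^{\PPP^X}=\sum_j\alpha_j\int_0^t H_{\alpha-e_j}(s,X_s)\,dX_s^j$, while It\^o's formula applied to $H_\alpha(t,X_t)$ — using the backward heat equation $\partial_t H_\alpha+\frac12\sum_j\partial_{x_j}^2 H_\alpha=0$, the recursion $\partial_{x_j}H_\alpha=\alpha_j H_{\alpha-e_j}$, and $H_\alpha(0,x)=x^\alpha$ with $Z_0=0$ — shows the right-hand side also equals $H_\alpha(t,X_t)$; the base case $\alpha=0$ is immediate. (One can also bypass stochastic integration entirely here and compute $E[Z_t^\alpha\mid\FFF^X_t]$ by independence of $Y$ from $\FFF^X_t$, using the classical representation $H_n(t,x)=E[(x+iB_t)^n]$.)

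The conformal It\^o calculus (vanishing brackets, hence no drift term) is what makes the whole argument run, but the step I expect to require the most care is the interchange of the predictable projection with limits: one must confirm that the relevant integrands ($a\,e^{a\cdot Z_\cdot}$, respectively $Z_\cdot^{\alpha-e_j}$) genuinely lie in $\HHHH^2(Z)$ on finite horizons, that $\Pi^X$ acts on them as the conditional expectation claimed, and — in the generating-function route — that term-by-term projection and coefficient extraction are legitimate. The structural facts about the generalized Hermite polynomials invoked above ($\partial_{x_j}H_\alpha=\alpha_j H_{\alpha-e_j}$, the backward heat equation, $H_\alpha(0,x)=x^\alpha$, and the generating-function identity) are classical, and I would simply cite them.
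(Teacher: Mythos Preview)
Your proposal is correct and aligns with the paper's intended approach: the paper states the corollary as an immediate consequence of \autoref{predproj} without providing any details, and your primary route for each part---writing $e^{a\cdot Z}$ and $Z^\alpha$ as driftless It\^o integrals against $Z$ (using $\langle Z^j,Z^k\rangle\equiv 0$) and invoking \autoref{predproj}---is precisely the argument the authors leave implicit. Your alternative derivations (the direct conditional-expectation computations using independence of $Y$ from $\FFF^X_t$, and the generating-function matching) are also valid and in fact bypass the stochastic-integration machinery entirely; the paper does not pursue these, but they are shorter and require no $\HHHH^2$ checks, so you may prefer them if you want a self-contained proof that does not lean on \autoref{predproj}.
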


\section{Expansions in Hermite polynomials}

The result of \autoref{powersonherm} (\emph{ii}) is particularly interesting because of the importance of Hermite polynomials in stochastic analysis, in particular in regards to their connection to iterated stochastic integrals and to the Wiener chaos expansion. Thus, expansions in Hermite polynomials and some of their properties will be examined more in detail in this section.

In the following, we denote by $K_n$ the homogeneous Wiener chaos of degree $n$ generated by $(X_t)_{t \in [0,1]}$. First of all, we recall that the Ornstein-Uhlenbeck semigroup $(T_t)_{t \ge 0}$ is defined, for $t \ge 0$ and $F \in L^2(\sigma(X_1))$ by
\[
T_t F:=\sum_{n=0}^\infty e^{- n t} \JJJ_n F,
\]
where $\JJJ_n$ denotes the orthogonal projection on $K_n$. It is well known that the properties of the Ornstein-Uhlenbeck semigroup lead to useful comparison results about the $L^p$-norms on the Wiener chaos. In particular, $(T_t)_{t \ge 0}$ enjoys the following hypercontractivity property:
\begin{prop}\label{hyperorn}
Assume that we have constants $1<p<q<\infty$ and $t>0$ such that
\[
e^t \ge \bigg(\frac{q-1}{p-1}\bigg)^{1/2}.
\]
Then we have that, for all $F \in L^p(\sigma(X_1))$,
\[
\|T_t F\|_q \le \|F\|_p.
\]
\end{prop}

The result can be found, for instance, in Nualart \cite{Nua}. It is then possible to derive the following estimate:

\begin{lem}\label{hyper}
Let $V_n$ be a random variable in $K_n$. Then, for $1 <p < q < \infty$ we have that
\[
\|V_n\|_{q} \le\bigg(\frac{q-1}{p-1}\bigg)^{n/2} \|V_n\|_{p}.
\]
\end{lem}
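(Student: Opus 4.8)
The plan is to exploit the fact that $V_n \in K_n$ is an eigenvector of the Ornstein--Uhlenbeck semigroup: by definition of $(T_t)_{t\ge 0}$ as $T_t F = \sum_{m=0}^\infty e^{-mt}\JJJ_m F$, and since $\JJJ_m V_n = V_n$ if $m=n$ and $0$ otherwise, we have $T_t V_n = e^{-nt} V_n$ for every $t \ge 0$. The idea is then to choose $t$ so that the hypercontractivity bound of \autoref{hyperorn} applies and reads off the claimed constant.

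Concretely, given $1 < p < q < \infty$, set $t := \tfrac12 \log\!\big(\tfrac{q-1}{p-1}\big)$, so that $e^t = \big(\tfrac{q-1}{p-1}\big)^{1/2}$, which is strictly positive since $q > p$. Then \autoref{hyperorn} gives $\|T_t V_n\|_q \le \|V_n\|_p$. Substituting $T_t V_n = e^{-nt} V_n$ on the left and pulling the scalar $e^{-nt}$ out of the norm yields
\[
e^{-nt}\,\|V_n\|_q \le \|V_n\|_p,
\]
and hence $\|V_n\|_q \le e^{nt}\,\|V_n\|_p = \big(\tfrac{q-1}{p-1}\big)^{n/2}\,\|V_n\|_p$, which is exactly the asserted inequality.

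There is essentially no obstacle here; the only points that need a word of care are that $V_n \in K_n \subset L^2$ need not a priori lie in $L^p$ for the given $p$, but since we may freely assume (as is standard, and follows from hypercontractivity applied with a fixed auxiliary exponent) that elements of a fixed Wiener chaos lie in all $L^r$ for $r < \infty$, the application of \autoref{hyperorn} is legitimate; and that $T_t$ acts on $L^2(\sigma(X_1))$ while the statement is about random variables in $K_n$, which is a closed subspace of $L^2(\sigma(X_1))$, so everything is consistent. If one prefers to avoid invoking integrability for granted, one first applies the inequality with $p$ replaced by $2$ (if $p < 2$) or argues by a limiting/truncation argument, but for the purposes of this paper the one-line eigenfunction computation above suffices.
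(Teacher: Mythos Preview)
Your proof is correct and follows essentially the same approach as the paper: use that $V_n$ is an eigenvector of the Ornstein--Uhlenbeck semigroup with eigenvalue $e^{-nt}$, choose $t$ so that $e^t = \big(\tfrac{q-1}{p-1}\big)^{1/2}$, and apply the hypercontractivity bound of \autoref{hyperorn}. The additional remarks on integrability are not needed here, but they do no harm.
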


\begin{proof}
It is well known that, by applying the operator $T_t$ to $V_n$, we get that 
\[
T_t V_n = e^{- n t} V_n.
\]
We now choose $t > 0$ such that $e^t = \big(\frac{q-1}{p-1}\big)^{1/2}$. Then, \autoref{hyperorn} implies that
\[
\bigg(\frac{q-1}{p-1}\bigg)^{-n/2} \|V_n\|_{q} = e^{- n t}\|V_n\|_{q}= \|T_t V_n\|_{q} \le \|V_n\|_{p}.
\qedhere
\]
\end{proof}

Moreover, with the help of the well known interpolation of H\"older's inequality, we can derive from \autoref{hyper} the following inequality.
\begin{lem}\label{hyper2}
Let $V_n$ is a random variable in $K_n$, and $p > 1$. Then:
\[
\|V_n\|_{p} \le e^{np/2} \|V_n\|_{1}.
\]
\end{lem}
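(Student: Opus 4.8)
The plan is to prove the inequality $\|V_n\|_p \le e^{np/2}\|V_n\|_1$ by combining \autoref{hyper} with the interpolation inequality for $L^r$-norms coming from H\"older's inequality. The case $p \le 2$ and the case $p > 2$ can be treated slightly differently, but a uniform argument should work as follows.

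First I would handle the case $1 < p \le 2$ directly via \autoref{hyper} with the choice of exponents $1 < p' < p$: for $V_n \in K_n$ and any $1 < p' < p$ we have $\|V_n\|_p \le \big(\tfrac{p-1}{p'-1}\big)^{n/2}\|V_n\|_{p'}$. Letting $p' \downarrow 1$ is tempting but the constant blows up, so instead one interpolates: pick $p' \in (1, p)$ and use the interpolation inequality $\|V_n\|_{p'} \le \|V_n\|_1^{1-\theta}\|V_n\|_p^{\theta}$ where $\tfrac1{p'} = (1-\theta) + \tfrac{\theta}{p}$. Substituting this into the previous bound yields $\|V_n\|_p \le \big(\tfrac{p-1}{p'-1}\big)^{n/2}\|V_n\|_1^{1-\theta}\|V_n\|_p^{\theta}$, and since $\theta < 1$ one can absorb the factor $\|V_n\|_p^{\theta}$ on the left (assuming $\|V_n\|_p < \infty$, which holds because all $L^r$-norms on a fixed chaos are equivalent by \autoref{hyper}), obtaining $\|V_n\|_p \le \big(\tfrac{p-1}{p'-1}\big)^{n/(2(1-\theta))}\|V_n\|_1$. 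One then chooses $p'$ (as a function of $p$) to make the resulting exponent of the constant at most $e^{np/2}$; a convenient explicit choice, e.g.\ $p'-1 = (p-1)/e^{p}$ or similar, should give $\big(\tfrac{p-1}{p'-1}\big)^{1/(2(1-\theta))} \le e^{p/2}$ after a short estimate.

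For the range $p > 2$, the cleanest route is to bound $\|V_n\|_p$ in terms of $\|V_n\|_2$ using \autoref{hyper} with exponents $2 < p$, giving $\|V_n\|_p \le (p-1)^{n/2}\|V_n\|_2$, and then to bound $\|V_n\|_2$ in terms of $\|V_n\|_1$: again by \autoref{hyper} (with, say, exponents $1 < \tfrac43 < 2$) together with the interpolation $\|V_n\|_{4/3} \le \|V_n\|_1^{1/2}\|V_n\|_2^{1/2}$, one gets a bound $\|V_n\|_2 \le C^n\|V_n\|_1$ for an absolute constant $C$. Multiplying, $\|V_n\|_p \le (C(p-1))^{n/2}\|V_n\|_1 \le (C')^{n}(p-1)^{n/2}\|V_n\|_1$, and it remains to check $C'(p-1)^{1/2} \le e^{p/2}$ for all $p > 1$, i.e.\ $\log C' + \tfrac12\log(p-1) \le \tfrac p2$, which is clear since $\log(p-1)$ grows far slower than $p$.

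The main obstacle is purely bookkeeping: one must pin down the free exponent $p'$ (or the auxiliary exponent in the $p>2$ case) so that the product of the hypercontractivity constant and the interpolation weights collapses into the clean bound $e^{np/2}$ for \emph{every} $p > 1$, including $p$ close to $1$ where $\theta$ and the constants degenerate simultaneously. I would first verify the algebraic inequality $\big(\tfrac{p-1}{p'-1}\big)^{1/(2(1-\theta))} \le e^{p/2}$ for a specific admissible choice of $p'$, and only then assemble the chain of inequalities; the stochastic-analytic content is entirely contained in \autoref{hyper}, so nothing deeper than elementary calculus is needed beyond it.
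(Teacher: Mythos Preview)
Your strategy---H\"older interpolation combined with the hypercontractivity estimate of \autoref{hyper}, using an auxiliary exponent---is exactly the paper's approach. The paper simply runs the mirror image: it picks an auxiliary exponent $q>p$ (rather than your $p'<p$), interpolates $\|V_n\|_p\le\|V_n\|_q^{1-\theta}\|V_n\|_1^{\theta}$ with $\frac1p=\frac{1-\theta}{q}+\theta$, and applies \autoref{hyper} from $p$ to $q$. After absorbing $\|V_n\|_p^{1-\theta}$ this gives
\[
\|V_n\|_p\le\Big(\tfrac{q-1}{p-1}\Big)^{\frac{n(1-\theta)}{2\theta}}\|V_n\|_1,
\]
and then the paper computes the infimum over $q>p$ as the limit $q\to p^+$, obtaining exactly $e^{np/2}$. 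Your version with $p'<p$ yields the same constant in the limit $p'\to p^-$.

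There is one genuine gap in your write-up: no \emph{fixed} auxiliary exponent delivers the constant $e^{np/2}$; the infimum is only attained in the limit. In particular your suggested choice $p'-1=(p-1)/e^{p}$ does not work: with that $p'$ one has $\tfrac{p-1}{p'-1}=e^{p}$ but $\tfrac{1}{2(1-\theta)}=\tfrac{(p-1)e^{-p}+1}{2(1-e^{-p})}>\tfrac12$, so the resulting constant exceeds $e^{np/2}$. You should replace the ``convenient explicit choice'' step by the limiting computation, which is a one-line calculus exercise.

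Two further remarks. First, the case split is unnecessary: your first argument is valid for every $p>1$, not only $p\le 2$, so the paper treats all $p$ uniformly. Second, your alternative route for $p>2$ (bounding $\|V_n\|_2\le C^n\|V_n\|_1$ via exponents $4/3$ and $2$) gives $C=3$, hence $\|V_n\|_p\le\bigl(9(p-1)\bigr)^{n/2}\|V_n\|_1$, and $9(p-1)>e^{p}$ for $p$ close to $2$; so that branch does not give the stated constant and should be dropped.
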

\begin{proof}
Let $q > p$ be arbitrary, and let $\theta=\theta(p,q) \in (0,1)$ be such that $\frac{1}{p}=\frac{1-\theta}{q}+ \theta$. Then, the interpolation of H\"older's inequality and \autoref{hyper} yield that
\[ \|V_n\|_p \le \|V_n\|_q^{1- \theta} \|V_n\|_1^\theta \le \bigg(\frac{q-1}{p-1}\bigg)^{n(1- \theta)/2} \|V_n\|_{p}^{1- \theta} \|V_n\|_1^\theta.
\]
By rearranging the terms, this gives us that
\[ \|V_n\|_p \le \bigg(\frac{q-1}{p-1}\bigg)^{\frac{n(1- \theta)}{2\theta}}\|V_n\|_1.
\]
The claim then follows by observing that
\[
\inf_{q \in (p,\infty)} \bigg(\frac{q-1}{p-1} \bigg)^{\frac{n(1- \theta(p,q))}{2\theta(p,q)}}
=\lim_{q \to p^+} \bigg(\frac{q-1}{p-1} \bigg)^{\frac{n(1- \theta(p,q))}{2\theta(p,q)}} = e^{np/2}.
\qedhere
\]
\end{proof}

Even though the constant $e^{np/2}$ could be further optimized, it is sufficiently small for our purpose. From now on, we will write $L^p$ for the space $L^p(\Omega, \FFF,P)$, $p \ge 1$. Because of the well known fact that $(H_\alpha(t,X_t))_{\alpha \in \NN^d}$ forms a complete basis of $L^2(\sigma(X_t))$, we will consider in the sequel series associated to the system $(H_\alpha(t,X_t))_{\alpha \in \NN^d}$. The hypercontractivity allows to find good estimates for $\|H_\alpha(t,X_t)\|_p$: indeed, for $p \ge 2$ we have that
\[
\|H_\alpha(t,X_t)\|_p \le e^{|\alpha|p/2} (\alpha! t^{|\alpha|})^{1/2},
\]
whereas for $1\le p \le 2$ we get that
\[
\|H_\alpha(t,X_t)\|_p \ge \|H_\alpha(t,X_t)\|_1 \ge e^{-|\alpha|/2} (\alpha! t^{|\alpha|})^{1/2}.
\]

As a consequence of the hypercontractivity, we can now derive the second main result of this chapter: this extends, via the corresponding Hermite series, the explicit expression of \autoref{powersonherm} to a class of $L^p$-martingales.
\begin{thm}\label{extension_Lp}
For $p> 1$ and $T>0$, let $(M_t)_{t \in [0,T]}$ be an $L^p$-martingale such that $M_t$ is $\sigma(X_t)$-measurable for all $t \in [0,T]$ (i.e. $M_t$ is of the form $g(t,X_t)$ for some function $g$). Moreover, define $b_\alpha$, for $\alpha \in \NN^d$, by
\[
b_\alpha:=\frac{E[M_t H_\alpha(t,X_t)]}{\|H_\alpha(t,X_t)\|_2^2}=\frac{E[M_t H_\alpha(t,X_t)]}{\alpha! t^{|\alpha|}}.
\]
Then, the function $f:\CC^d \to \CC$, $f(z):=\sum_{\alpha \in \NN^d} b_\alpha z^\alpha$, is well defined, is analytic of order $2$ and can be represented for $z \in \CC^d$ and $t \in [0,T]$ as
\begin{equation} \label{expression_f}
f(z)=E\bigg[M_t \exp \bigg(\frac{1}{t} \Big(z \cdot X_t - \frac{z^T z}{2}\Big)\bigg) \bigg].
\end{equation}

Moreover, let $S< (p \vee p^\ast)^{-1} \, T$, where $p^\ast=\frac{p}{p-1}$ is the conjugate exponent of $p$. Then, $(f(Z_s))_{s \in [0,S]}$ is an $L^p$-martingale such that
\[
\big(f(Z_s)\big)^{\PPP^X}=M_s, \quad s \in [0,S].
\]
\end{thm}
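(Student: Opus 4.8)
The plan is to establish the three assertions in order: (1) $f$ is well defined and analytic of order $2$; (2) the integral representation \eqref{expression_f}; (3) the martingale property of $f(Z_s)$ on $[0,S]$ together with the projection identity. The backbone of the argument is the Hermite expansion of $M_t$ together with the hypercontractivity estimates collected above, so I would begin by writing $M_t = \sum_{\alpha} b_\alpha H_\alpha(t,X_t)$ in $L^2(\sigma(X_t))$, which is legitimate since $(H_\alpha(t,X_t))_\alpha$ is a complete orthogonal basis of $L^2(\sigma(X_t))$ and the $b_\alpha$ are exactly the Fourier coefficients.

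First I would bound the coefficients. By Cauchy--Schwarz and the martingale property (which lets one evaluate $E[M_t H_\alpha(t,X_t)]$ at a convenient time, or simply at time $t$), combined with the lower estimate $\|H_\alpha(t,X_t)\|_{p^\ast}\ge \|H_\alpha(t,X_t)\|_1\ge e^{-|\alpha|/2}(\alpha!\,t^{|\alpha|})^{1/2}$ when $p^\ast \le 2$ and the hypercontractive upper bound $\|H_\alpha(t,X_t)\|_{p^\ast}\le e^{|\alpha|p^\ast/2}(\alpha!\,t^{|\alpha|})^{1/2}$ when $p^\ast\ge 2$, one gets for all $p>1$ an estimate of the shape $|b_\alpha| \le C\,e^{c|\alpha|}\,(\alpha!\,t^{|\alpha|})^{-1/2}$ with $C=\|M_t\|_p$ and $c=c(p)$. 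Since $|z^\alpha|\le |z|^{|\alpha|}$ and $\sum_{|\alpha|=n} 1/\alpha! = d^n/n!$, the series $\sum_\alpha b_\alpha z^\alpha$ is dominated termwise by $C\sum_n (e^c |z| \sqrt{d/t})^n \sqrt{n!}/n! \cdot (\text{comb.\ factor})$, which converges for every $z\in\CC^d$ and exhibits growth $|f(z)|\le C'\exp(c'|z|^2)$; hence $f$ is entire of order $\le 2$. This yields (1).

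For (2), note that $\exp\!\big(\tfrac1t(z\cdot X_t-\tfrac{z^Tz}{2})\big)=\sum_\alpha \tfrac{1}{\alpha! t^{|\alpha|}}H_\alpha(t,X_t)\,z^\alpha$, the generating-function identity for the multivariate Hermite polynomials. Multiplying by $M_t$ and taking expectations, the interchange of sum and expectation is justified by the coefficient bounds above together with the $L^{p^\ast}$-bounds on $H_\alpha(t,X_t)$ (so that $\sum_\alpha |b_\alpha|\,|z^\alpha|\,\|H_\alpha(t,X_t)\|_{p^\ast}\cdot\ldots$ — more cleanly, one dominates $\sum_\alpha |z^\alpha|\,\tfrac{1}{\alpha!t^{|\alpha|}}|M_t|\,|H_\alpha(t,X_t)|$ in $L^1$ using $\|H_\alpha\|_{p^\ast}$ and Hölder), giving $E[M_t H_\alpha(t,X_t)]/(\alpha! t^{|\alpha|})=b_\alpha$ term by term, i.e.\ \eqref{expression_f}. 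For (3), fix $s\le S$ and plug $z=Z_s=X_s+iY_s$ (with $Y$ independent of $X$) into the series. The martingale property of $(f(Z_s))_{s\in[0,S]}$ follows because $f(Z_s)=\sum_\alpha b_\alpha Z_s^\alpha$ and each $Z_s^\alpha$ projects, under $\Pi^X$, onto $H_\alpha(s,X_s)$ by \autoref{powersonherm}(ii); more precisely, by \autoref{predproj} (or rather by the representation $Z_s^\alpha = \int (\ldots)dZ + \ldots$), the conditional-expectation/martingale structure is inherited from that of $M$, and the projection identity $(f(Z_s))^{\PPP^X}=\sum_\alpha b_\alpha (Z_s^\alpha)^{\PPP^X}=\sum_\alpha b_\alpha H_\alpha(s,X_s)=M_s$ follows once the interchange of $\PPP^X$ with the infinite sum is justified — which again rests on the same $L^{p}$-convergence of the Hermite series.

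The main obstacle is the constraint $S<(p\vee p^\ast)^{-1}T$ and showing that it is exactly what makes $f(Z_s)\in L^p$ with everything converging: I would estimate $\|f(Z_s)\|_p\le \sum_\alpha |b_\alpha|\,\|Z_s^\alpha\|_p$, bound $\|Z_s^\alpha\|_p$ via $|Z_s|=|X_s|+\ldots$ — concretely using $\|(X_s+iY_s)^\alpha\|_p \lesssim (\alpha!\,s^{|\alpha|})^{1/2} e^{C|\alpha|}$ type bounds (again hypercontractivity, now for chaos in the pair $(X,Y)$ at time $s$), and combine with $|b_\alpha|\lesssim e^{c|\alpha|}(\alpha!\,t^{|\alpha|})^{-1/2}$: the ratio $(s/t)^{|\alpha|/2}$ against the exponential factors forces a geometric series whose convergence needs $s$ bounded away from $t$ by the stated factor involving $p$ and $p^\ast$ (the two cases $p\ge p^\ast$ and $p^\ast\ge p$ producing the $p\vee p^\ast$). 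Getting the sharp threshold — and checking that on $[0,S]$ one may pass $\PPP^X$ and the conditional expectations defining the martingale property through the sum — is the delicate bookkeeping; everything else is the standard generating-function-plus-hypercontractivity machinery.
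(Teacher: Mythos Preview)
Your overall architecture matches the paper's: bound the $b_\alpha$ via H\"older and hypercontractivity to get entireness, derive the closed-form representation \eqref{expression_f}, and then read off the $L^p$-martingale and projection statements. Two remarks, one cosmetic and one substantive.

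\medskip

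\textbf{A minor difference in step (2).} You propose to obtain \eqref{expression_f} from the Hermite generating function in $X_t$. The paper instead replaces $E[M_t H_\alpha(t,X_t)]$ by $E[M_t Z_t^\alpha]$ (legitimate since $M_t$ is $\FFF_t^X$-measurable and $(Z_t^\alpha)^{\PPP^X}=H_\alpha(t,X_t)$), sums to $E[M_t\exp(z\cdot Z_t/t)]$, and then integrates out $Y_t$. Both routes work; the paper's is slightly cleaner because the dominated-convergence justification is immediate.

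\medskip

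\textbf{The real gap: the threshold $S<(p\vee p^\ast)^{-1}T$.} Your proposed route for $f(Z_s)\in L^p$ is the triangle inequality
\[
\|f(Z_s)\|_p\le \sum_\alpha |b_\alpha|\,\|Z_s^\alpha\|_p,
\]
with $|b_\alpha|\le \|M_t\|_p\big((p^\ast-1)\vee 1\big)^{|\alpha|/2}(\alpha!\,t^{|\alpha|})^{-1/2}$ and, since $Z_s^\alpha$ lies in the $|\alpha|$-th chaos of the $(X,Y)$-space with $\|Z_s^\alpha\|_2^2=(2s)^{|\alpha|}\alpha!$, the bound $\|Z_s^\alpha\|_p\le\big((p-1)\vee 1\big)^{|\alpha|/2}(2s)^{|\alpha|/2}\sqrt{\alpha!}$. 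Multiplying, the geometric ratio is
\[
\big[\,((p^\ast-1)\vee 1)((p-1)\vee 1)\cdot 2s/t\,\big]^{|\alpha|/2}
=\big[\,2((p\vee p^\ast)-1)\,s/t\,\big]^{|\alpha|/2},
\]
so your series converges only for $s<t/\big(2((p\vee p^\ast)-1)\big)$. For $p=2$ this equals $t/2=t/(p\vee p^\ast)$, but for every $p\ne 2$ it is strictly smaller than $t/(p\vee p^\ast)$, and the defect does not disappear with more careful bookkeeping: the triangle inequality on the series throws away the phase structure of $Z_s^\alpha$ and cannot recover the stated constant.

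The paper avoids this by using the representation \eqref{expression_f} \emph{itself} to get a pointwise bound. Applying H\"older's inequality directly to
$f(z)=E\big[M_t\exp\big(\tfrac1t(z\cdot X_t-\tfrac{z^Tz}{2})\big)\big]$
gives
\[
|f(z)|\le \|M_t\|_p\,\exp\!\Big(\tfrac{1}{2t}\big((p^\ast-1)\,|\Real(z)|^2+|\Imag(z)|^2\big)\Big),
\]
which separates real and imaginary parts. Plugging $z=Z_s$ and taking $p$-th powers, the two Gaussian integrals require $p(p^\ast-1)s<t$ and $ps<t$; since $p(p^\ast-1)=p^\ast$, this is exactly $s<t/(p\vee p^\ast)$. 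So the key step you are missing is: after proving \eqref{expression_f}, go back to it with H\"older to get the anisotropic growth bound, and use \emph{that} (not the power-series majorization) to control $E[|f(Z_s)|^p]$. Once $f(Z_s)\in L^p$ is in hand, your argument for the martingale property and for $(f(Z_s))^{\PPP^X}=M_s$ via \autoref{powersonherm}(ii) and term-by-term projection goes through.
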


\begin{proof}
First of all, we show that the series $\sum_{\alpha \in \NN^d} b_\alpha z^\alpha$ is absolutely convergent for all $z \in \CC^d$. Since $\prod_{i=1}^d |z_i|^{\alpha_i} \le |z|^{|\alpha|}$, it is sufficient to prove that
\[
\sum_{n=0}^\infty \bigg(\sum_{|\alpha|=n} |b_\alpha|\bigg) |z|^n < \infty.
\]  
By H\"older's inequality and \autoref{hyper}, it is easy to verify that
\begin{align*}
\bigg(\sum_{|\alpha|=n} |b_\alpha|\bigg)^{1/n} &\le\bigg(\sum_{|\alpha|=n}\frac{\|M_t\|_p\|H_\alpha(t,X_t)\|_{p^\ast}}{\alpha!t^{|\alpha|}}\bigg)^{1/n} \\
&\le\bigg(\|M_t\|_p \sum_{|\alpha|=n}\big(({p^\ast}-1)\vee 1 \big)^{|\alpha|/2} \; \frac{\|H_\alpha(t,X_t)\|_2}{\alpha!t^{|\alpha|}}\bigg)^{1/n} \\
&= \bigg(\bigg(\frac{1/(p-1)\vee 1}{t}\bigg)^{n/2} \|M_t\|_p \sum_{|\alpha|=n} \frac{1}{\sqrt{\alpha!}}\bigg)^{1/n}.
\end{align*}
Therefore, by the multinomial theorem,
\begin{align*}
\bigg(\sum_{|\alpha|=n} |b_\alpha|\bigg)^{1/n} &\le \sqrt{\frac{1/(p-1)\vee 1}{t}} \bigg(\|M_t\|_p \sqrt{\big|\big\{\alpha \, | \, |\alpha|=n\big\}\big|}\sqrt{\sum_{|\alpha|=n} \frac{1}{\alpha!}}\bigg)^{1/n} \\
&\le \sqrt{\frac{1/(p-1)\vee 1}{t}} \bigg(\|M_t\|_p \frac{d^{n/2}}{\sqrt{n!}} \sqrt{\sum_{|\alpha|=n} \binom{n}{\alpha}}\bigg)^{1/n} \\
&= d \sqrt{\frac{1/(p-1)\vee 1}{t}} \bigg(\|M_t\|_p \frac{1}{\sqrt{n!}}\bigg)^{1/n}.
\end{align*}
The last term converges for $n \to \infty$ to $0$ because of Stirling's approximation. This shows that $f$ is well defined and analytic. We now prove the representation \eqref{expression_f}. By applying \autoref{powersonherm} and the dominated convergence theorem, it is easy to check that, for all $t > 0$,
\begin{align*}
f(z)=\sum_{\alpha \in \NN^d} \frac{1}{\alpha! t^{|\alpha|}} E[M_t H_\alpha(t,X_t)] \cdot z^\alpha 
&=\sum_{\alpha \in \NN^d} \frac{1}{\alpha! t^{|\alpha|}} E[M_t Z_t^\alpha] \cdot z^\alpha \\ 
&= E\left[M_t \sum_{\alpha \in \NN^d} \frac{z^\alpha Z_t^\alpha}{\alpha! t^{|\alpha|}} \right].
\end{align*}
Hence, by the multinomial theorem, 
\begin{align*}
f(z)= E\bigg[M_t \sum_{n \in \NN} \sum_{|\alpha|=n} \frac{z^\alpha Z_t^\alpha}{\alpha! t^{|\alpha|}} \bigg]
&= E\bigg[M_t \sum_{n \in \NN} \frac{1}{n! t^n}\sum_{|\alpha|=n} \binom{n}{\alpha} z^\alpha Z_t^\alpha \bigg]  \\
&= E\bigg[M_t \sum_{n \in \NN} \frac{(z \cdot Z_t)^n}{n! t^n} \bigg],
\end{align*}
and the desired representation follows by computing that
\begin{align*}
f(z)= E\bigg[M_t \exp\bigg(\frac{z \cdot Z_t}{t} \bigg) \bigg] 
&= E\bigg[M_t \exp\bigg(\frac{z \cdot X_t}{t} \bigg) E\bigg[\exp\bigg( i \, \frac{z \cdot Y_t}{t} \bigg)\bigg] \bigg]\\ 
&= E\bigg[M_t \exp \bigg(\frac{1}{t} \Big(z \cdot X_t - \frac{z^T z}{2}\Big)\bigg) \bigg].
\end{align*}
On the other hand, by applying H\"older's inequality to this representation we can verify that
\begin{align}
|f(z)| &\le \|M_t\|_p \left\|\exp \left(\frac{1}{t} \left( z \cdot X_t - \frac{z^T z}{2}\right)\right)\right\|_{p^\ast} \nonumber\\
&=\|M_t\|_p E\left[\left|\exp\left(\frac{{p^\ast}}{t} z \cdot X_t\right)\right|\right]^{1/{p^\ast}} \exp\bigg(- \frac{\Real(z^T z)}{2t}\bigg) \nonumber\\
&= K_t \exp\bigg(\frac{1}{2t}(({p^\ast}-1) \Real(z)^2 + \Imag(z)^2)\bigg) \nonumber\\
&\le K_t \exp\bigg(\frac{({p^\ast}-1) \vee 1}{2t} |z|^2\bigg), \label{estf}
\end{align}
where $K_t:=\|M_t\|_p$, and hence $f$ is analytic of order $2$. It remains to consider the process $(f(Z_s))_{s \in [0,S]}$. Because of the choice of $S$, we get that $f(Z_s) \in L^p$ for all $s \in [0,S]$; namely, by taking some $t \in [0,T]$ such that $t > (p \vee p^\ast) s$, the estimate \eqref{estf} implies that
\[
E\Big[|f(Z_s)|^p\Big] \le K_t E\bigg[\exp\bigg(\frac{p \vee p^\ast}{2t} |Z_s|^2\bigg)\bigg] < \infty.
\]
Then, the fact that $(f(Z_s))_{s \in [0,S]}$ is a martingale such that $\big(f(Z_s)\big)^{\PPP^X}=M_s$ for $s \in [0,S]$ is obtained by applying Fubini's theorem and \autoref{powersonherm} to the integral representation \eqref{expression_f} for $f$: the details are left to the reader.
\end{proof}

\noindent As a consequence, we obtain the following result for martingales on $[0,\infty)$:
\begin{cor}\label{cor_extension_Lp}
Let $p> 1$, and assume that $(M_t)_{t \in [0,\infty)}$ is an $L^p$-martingale such that $M_t$ is $\sigma(X_t)$-measurable for all $t \in [0,T]$, and let $f$ as in \autoref{extension_Lp}. Then, $(f(Z_t))_{t \in [0,\infty)}$ is an $L^p$-martingale such that
\[
\big(f(Z_t)\big)^{\PPP^X}=M_t, \quad t \in [0,\infty).
\]
\end{cor}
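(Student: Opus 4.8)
The plan is to deduce \autoref{cor_extension_Lp} from \autoref{extension_Lp} by letting the horizon $T$ tend to infinity: the only restriction in \autoref{extension_Lp} is $S<(p\vee p^\ast)^{-1}T$, and this becomes vacuous in the limit, while the three properties to be established --- integrability of $f(Z_t)$, the martingale property, and the projection identity --- are all local in time.

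First I would record that $f$ is unambiguous. For each $T>0$ the restriction $(M_t)_{t\in[0,T]}$ satisfies the hypotheses of \autoref{extension_Lp}, and the associated function, say $f_T$, admits the representation \eqref{expression_f} for every $t\in[0,T]$. If $T_1<T_2$, then $f_{T_1}$ and $f_{T_2}$ both satisfy \eqref{expression_f} at $t=T_1$, whence $f_{T_1}=f_{T_2}$ on all of $\CC^d$; equivalently the coefficients $b_\alpha=E[M_tH_\alpha(t,X_t)]/(\alpha!\,t^{|\alpha|})$ do not depend on $t$, which is already built into the statement of \autoref{extension_Lp}. (Should one wish to see this directly, at least for $p\ge 2$: expand $M_t=\sum_\alpha b_\alpha(t)\,H_\alpha(t,X_t)$ in the orthogonal basis $(H_\alpha(t,\cdot))_\alpha$ of $L^2(\sigma(X_t))$, apply $E[\,\cdot\mid\FFF^X_s]$, and use that both $s\mapsto H_\alpha(s,X_s)$ and $M$ are $\FF^X$-martingales to obtain $b_\alpha(t)=b_\alpha(s)$ for $s\le t$.) Hence there is a single analytic $f$, and \autoref{extension_Lp} says that for every $T>0$ the process $(f(Z_s))_{s\in[0,S]}$ is an $L^p$-martingale with $\big(f(Z_s)\big)^{\PPP^X}=M_s$ whenever $S<(p\vee p^\ast)^{-1}T$.

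Next I would localise each assertion. Fix $t\in(0,\infty)$ (the case $t=0$ being trivial) and pick any $T>(p\vee p^\ast)\,t$, so that $t<(p\vee p^\ast)^{-1}T$; choosing $S$ with $t\le S<(p\vee p^\ast)^{-1}T$ and applying \autoref{extension_Lp} with this $T$ gives at once $f(Z_t)\in L^p$ and $\big(f(Z_t)\big)^{\PPP^X}=M_t$. For the martingale property on $[0,\infty)$, given $0\le s\le t<\infty$ I would again take $T>(p\vee p^\ast)\,t$ and $S\in[\,t,(p\vee p^\ast)^{-1}T)$; since $(f(Z_u))_{u\in[0,S]}$ is a martingale, $E[f(Z_t)\mid\FFF_s]=f(Z_s)$, and as $0\le s\le t$ were arbitrary, $(f(Z_t))_{t\in[0,\infty)}$ is an $L^p$-martingale. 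This finishes the argument.

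Once \autoref{extension_Lp} is available the reasoning is essentially bookkeeping, so I do not expect a genuine obstacle; the only point that calls for a word of justification is the horizon-independence of $f$ in the first step, everything else being the routine observation that the constraint $S<(p\vee p^\ast)^{-1}T$ is empty after passing to the supremum over $T$.
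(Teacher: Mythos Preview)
Your argument is correct and is exactly the intended route: the paper states the corollary without proof, treating it as an immediate consequence of \autoref{extension_Lp}, and the way to make this precise is precisely your localisation---for each finite $t$ choose $T>(p\vee p^\ast)\,t$ and apply the theorem. Your care about the $T$-independence of $f$ is slightly more than the paper demands (the $t$-independence of the $b_\alpha$ is already implicit in the statement of \autoref{extension_Lp}, and follows directly from the representation \eqref{expression_f}), but it does no harm.
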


By analyzing the proof of \autoref{extension_Lp}, we immediately notice that the result cannot hold for $p=1$, due to the unboundedness of the Hermite polynomials. It is however possible to relate the convergence in $L^1$ of an Hermite series to that in $L^p$, $p>1$. 

\begin{prop}\label{hermseries}
Let $t \ge 0$, and assume that the family $( b_\alpha H_\alpha(t,X_t) )_{\alpha \in \NN^d}$ is bounded in $L^1$. Then, for $p > 1$, $\sum_{\alpha \in \NN^d} b_\alpha H_\alpha(s,X_s)$ converges absolutely in $L^p$ for $s < \frac{t}{d^2 e^p}$.
\end{prop}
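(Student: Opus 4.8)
The plan is to estimate the series termwise and dominate it by a convergent geometric series. Absolute convergence in $L^p$ means $\sum_{\alpha \in \NN^d}\|b_\alpha H_\alpha(s,X_s)\|_p<\infty$, so I set $C:=\sup_{\alpha\in\NN^d}\|b_\alpha H_\alpha(t,X_t)\|_1$, which is finite by hypothesis (and we may assume $t>0$, the statement being vacuous otherwise). The two tools are a Brownian scaling identity, which transfers norms from time $t$ to time $s$ at the price of a factor $(s/t)^{|\alpha|/2}$, and \autoref{hyper2}, which upgrades an $L^1$-bound to an $L^p$-bound at the price of a factor $e^{|\alpha|p/2}$.

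First I would record the scaling identity. Because $X_s$ has the law of $\sqrt{s/t}\,X_t$ and the heat polynomials obey $H_n(s,\sqrt{s/t}\,y)=(s/t)^{n/2}H_n(t,y)$ --- read off from the generating function $\sum_{n}\frac{\lambda^n}{n!}H_n(u,y)=\exp\!\big(\lambda y-\tfrac{\lambda^2 u}{2}\big)$ --- taking the product over the $d$ coordinates gives $H_\alpha(s,\sqrt{s/t}\,y)=(s/t)^{|\alpha|/2}H_\alpha(t,y)$, whence
\[
\|H_\alpha(s,X_s)\|_r=(s/t)^{|\alpha|/2}\,\|H_\alpha(t,X_t)\|_r\qquad\text{for every }r\ge 1.
\]
Next, $H_\alpha(t,X_t)$ is a product of Hermite polynomials in the independent components $X^1,\dots,X^d$ of total degree $|\alpha|$, hence lies in the homogeneous chaos $K_{|\alpha|}$ (if $t>1$ one first rescales to time $1$ via the identity above, so that \autoref{hyper2} applies literally). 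Applying \autoref{hyper2} to $V_{|\alpha|}:=b_\alpha H_\alpha(t,X_t)\in K_{|\alpha|}$ gives $\|b_\alpha H_\alpha(t,X_t)\|_p\le e^{|\alpha|p/2}\|b_\alpha H_\alpha(t,X_t)\|_1\le C\,e^{|\alpha|p/2}$, and combining with the scaling identity,
\[
\|b_\alpha H_\alpha(s,X_s)\|_p\le C\Big(\frac{s\,e^p}{t}\Big)^{|\alpha|/2}.
\]

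Finally I would sum over $\alpha$ grouped by total degree $n=|\alpha|$, using the crude count $|\{\alpha\in\NN^d:|\alpha|=n\}|\le d^n$ (each such $\alpha$ is the multiplicity vector of some word in $\{1,\dots,d\}^n$):
\[
\sum_{\alpha\in\NN^d}\|b_\alpha H_\alpha(s,X_s)\|_p\le C\sum_{n=0}^\infty d^n\Big(\frac{s\,e^p}{t}\Big)^{n/2}=C\sum_{n=0}^\infty\Big(\frac{d^2 s\,e^p}{t}\Big)^{n/2},
\]
a convergent geometric series precisely when $d^2 s\,e^p<t$, i.e. for $s<\frac{t}{d^2 e^p}$. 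The only genuine subtleties are the verification that $H_\alpha(t,X_t)\in K_{|\alpha|}$ --- the product structure over independent coordinates, together with rescaling to time $1$ when $t>1$ --- and tracking the constant through the counting bound so that the factor $d^2$ appears exactly in the threshold; everything else is routine.
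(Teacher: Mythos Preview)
Your proof is correct and follows essentially the same route as the paper: apply \autoref{hyper2} to pass from $L^1$ to $L^p$ at the cost of $e^{|\alpha|p/2}$, use Brownian scaling to trade time $s$ for time $t$ at the cost of $(s/t)^{|\alpha|/2}$, and then group by total degree with the crude count $|\{\alpha:|\alpha|=n\}|\le d^n$ to obtain a geometric series. The only cosmetic difference is the order in which you apply scaling and hypercontractivity, which is immaterial since both effects are multiplicative; your extra remark about rescaling to time $1$ so that $H_\alpha(t,X_t)$ genuinely sits in the chaos $K_{|\alpha|}$ as defined in the paper is a detail the paper glosses over.
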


\begin{proof}
Let $s \ge 0$ be arbitrary. As a consequence of the estimates on Hermite polynomials and of the scaling property of Brownian motion we obtain that
\begin{align}
\sum_{\alpha \in \NN^d} |b_\alpha| \|H_\alpha(s,X_s)\|_p
&\le \sum_{\alpha \in \NN^d} |b_\alpha| e^{|\alpha|p/2} \|H_\alpha(s,X_s)\|_1 \nonumber \\
&= \sum_{\alpha \in \NN^d} |b_\alpha| \bigg(e^{p} \frac{s}{t}\bigg)^{|\alpha|/2} \|H_\alpha(t,X_t)\|_1. \label{Lp_conv}
\end{align}
Let $K$ denote the $L^1$-bound on the family $( b_\alpha H_\alpha(t,X_t) )_{\alpha \in \NN^d}$, and assume that $s < \frac{t}{d^2 e^p}$. As a consequence of \eqref{Lp_conv}, we then obtain that
\begin{align*}
\sum_{\alpha \in \NN^d} |b_\alpha| \|H_\alpha(s,X_s)\|_p
&\le K \sum_{\alpha \in \NN^d} \bigg(e^{p} \frac{s}{t}\bigg)^{|\alpha|/2} \\
&\le K \sum_{n=0}^\infty \bigg(e^{p} \frac{s}{t}\bigg)^{n/2} \big|\{ \alpha \in \NN^d | \ |\alpha|=n\}\big| \\ 
&\le K \sum_{n=0}^\infty \bigg(e^{p/2} d \sqrt{\frac{s}{t}}\bigg)^n < \infty,
\end{align*}
because of the choice of $s$. This proves the desired absolute convergence.
\end{proof}

As a consequence of \autoref{hermseries}, we also obtain:
\begin{cor}\label{cor_hermseries}
Assume that, for all $t \ge 0$, the series $\sum_{\alpha \in \NN^d} b_\alpha H_\alpha(t,X_t)$ converges, for some rearrangement of the terms, in $L^1$. Then, $\sum_{\alpha \in \NN^d} b_\alpha H_\alpha(t,X_t)$ converges absolutely in $L^p$ for all $t \ge 0$ and $p \ge 1$.
\end{cor}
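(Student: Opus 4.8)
The plan is to combine \autoref{hermseries} with a bootstrapping argument exploiting the scaling behaviour captured in \eqref{Lp_conv}. First I would observe that the hypothesis gives, for \emph{every} fixed $t \ge 0$, that the family $(b_\alpha H_\alpha(t,X_t))_{\alpha \in \NN^d}$ is bounded in $L^1$: indeed, if the series $\sum_\alpha b_\alpha H_\alpha(t,X_t)$ converges in $L^1$ under some rearrangement, then its partial sums form a convergent, hence bounded, sequence in $L^1$; and since $(H_\alpha(t,X_t))_\alpha$ is an orthogonal family in $L^2 \subset L^1$, consecutive partial sums differ by a single term $b_\alpha H_\alpha(t,X_t)$, so each such term is bounded in $L^1$ by twice the sup of the partial-sum norms. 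Thus the $L^1$-boundedness hypothesis of \autoref{hermseries} holds at every $t$.

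Next I would fix an arbitrary target time $t_0 \ge 0$ and an arbitrary $p > 1$, and simply choose an auxiliary time $t$ large enough that $t_0 < \frac{t}{d^2 e^p}$, i.e. $t > d^2 e^p t_0$. By the previous paragraph the family $(b_\alpha H_\alpha(t,X_t))_\alpha$ is bounded in $L^1$, so \autoref{hermseries} applies with this $t$ and this $p$, yielding absolute convergence of $\sum_\alpha b_\alpha H_\alpha(s,X_s)$ in $L^p$ for all $s < \frac{t}{d^2 e^p}$; in particular for $s = t_0$. Since $t_0 \ge 0$ and $p > 1$ were arbitrary, this gives absolute $L^p$-convergence for all $t \ge 0$ and all $p > 1$. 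The case $p = 1$ then follows either directly from the hypothesis together with the fact that absolute convergence in any $L^p$ with $p>1$ implies absolute convergence in $L^1$ (as $\|\cdot\|_1 \le \|\cdot\|_p$), or by the same scaling trick applied to the estimate $\|H_\alpha(t,X_t)\|_1 \ge e^{-|\alpha|/2}(\alpha! t^{|\alpha|})^{1/2}$ combined with the $p=2$ (say) case.

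I expect the only genuine subtlety to be the justification that rearrangement-convergence in $L^1$ forces the individual terms $b_\alpha H_\alpha(t,X_t)$ to be bounded (uniformly in $\alpha$) in $L^1$ — this is where the orthogonality of the Hermite system is used to control the increments of the partial sums, and one should be slightly careful that "converges for some rearrangement" is enough, since along a convergent rearranged sequence of partial sums the difference of two consecutive partial sums is still exactly one term $b_\alpha H_\alpha(t,X_t)$. Everything else is a routine reduction: the heart of the matter, namely the geometric-series estimate controlling $\sum_\alpha |b_\alpha|\|H_\alpha(s,X_s)\|_p$ via the factor $(e^p s/t)^{|\alpha|/2}$ and the bound $|\{\alpha : |\alpha|=n\}| \le d^n/\text{(something)}$, has already been done inside the proof of \autoref{hermseries}, so no new calculation is required here.
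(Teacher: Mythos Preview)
Your proposal is correct and is exactly the intended argument: the paper states the corollary without proof as an immediate consequence of \autoref{hermseries}, and your reduction --- fix $t_0$ and $p$, pick an auxiliary $t > d^2 e^p t_0$, apply the proposition, then recover $p=1$ from $\|\cdot\|_1 \le \|\cdot\|_p$ --- is precisely how one unpacks that. One minor clean-up: the orthogonality of the Hermite system plays no role in showing that the individual terms are $L^1$-bounded, since consecutive partial sums of \emph{any} rearranged series differ by a single term by definition; you can simply drop that remark.
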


\section{Widder's representation for Brownian martingales}
As an application of the above properties, we derive a characterization of Widder's theorem about the representation of positive martingales. First of all, we recall how the classical formulation of Widder translates in a probabilistic setting. A purely probabilistic proof of this theorem can be found in \cite{ManYorBook}, but we prefer to include a different presentation for the convenience of the reader. Moreover, we observe that our result hold for any dimension $d \in \NN$.

\begin{thm}\label{widder_original}
Let $X$ be a $d$-dimensional Brownian motion, and suppose that $M_t=g(t,X_t)$ is a continuous martingale such that $g(0,0)=1$ and $g(t,X_t) \ge 0$. Then, there exists a probability measure $\mu$ on $\RR^d$ such that, for all $t \ge 0$,
\[
M_t=\int_{\RR^d} \EEE(v \cdot X)_t \ \mu (d v)\quad P\text{-a.s.}.
\]
\end{thm}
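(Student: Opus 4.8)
The plan is to obtain $\mu$ as a weak limit of a family of explicit probability measures read off from $g$. Since $M_t=g(t,X_t)$ is a martingale and $X$ is Markov, one has $g(s,\cdot)=P_{t-s}\,g(t,\cdot)$ for $0\le s\le t$, where $(P_r)_{r\ge 0}$ is the transition semigroup of $X$, with kernel $p_r(y,z)=(2\pi r)^{-d/2}\exp(-|z-y|^2/(2r))$; in particular $g$ is smooth on $(0,\infty)\times\RR^d$, solves $\partial_t g+\tfrac12\Delta g=0$ there, and $\int_{\RR^d}g(t,z)\,p_t(0,z)\,dz=g(0,0)=1$. For $t>0$ I would define the probability measure
\[
\mu_t(dv):=\Big(\tfrac{t}{2\pi}\Big)^{d/2}\exp\!\big(-\tfrac{t}{2}|v|^2\big)\,g(t,tv)\,dv\qquad\text{on }\RR^d ;
\]
it is nonnegative because $g\ge 0$, and the substitution $z=tv$ together with the normalisation above shows $\mu_t(\RR^d)=1$. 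Equivalently, $\mu_t$ is the image of the probability measure $g(t,z)\,p_t(0,z)\,dz$ under the dilation $z\mapsto z/t$.

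The computational core is the identity
\[
\int_{\RR^d}\exp\!\big(v\cdot x-\tfrac12|v|^2 s\big)\,\mu_t(dv)=\Big(\tfrac{t}{t+s}\Big)^{d/2}\exp\!\Big(\tfrac{|x|^2}{2(t+s)}\Big)\;g\Big(\tfrac{ts}{t+s},\,\tfrac{t}{t+s}\,x\Big),
\]
valid for every $s\ge 0$, $x\in\RR^d$, $t>0$. To prove it I would substitute $z=tv$ and complete the square in $z$ inside the exponential: this turns the integral, up to an explicit prefactor, into the Gaussian convolution $\int g(t,z)\,p_\sigma\big(\tfrac{\sigma}{t}x,z\big)\,dz$ with $\sigma:=t^2/(t+s)\in(0,t)$ for $s>0$, and the semigroup relation gives $\int g(t,z)\,p_\sigma(y,z)\,dz=(P_\sigma g(t,\cdot))(y)=g(t-\sigma,y)$ with $t-\sigma=ts/(t+s)$. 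Letting $t\to\infty$, the prefactor tends to $1$ and, by continuity of $g$, the right-hand side tends to $g(s,x)$; hence
\[
\int_{\RR^d}\exp\!\big(v\cdot x-\tfrac12|v|^2 s\big)\,\mu_t(dv)\ \xrightarrow[\ t\to\infty\ ]{}\ g(s,x)\qquad\text{for all }s\ge 0,\ x\in\RR^d .
\]

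It remains to upgrade this pointwise convergence to weak convergence of the $\mu_t$. For that I would establish tightness of $(\mu_t)_{t\ge 1}$ through a second-moment bound: differentiating the identity twice in $x$ at $x=0$, $s=0$ (or, to avoid any appeal to the behaviour of $g$ at $t=0$, by a direct Brownian-bridge computation of $\int|z|^2 g(t,z)\,p_t(0,z)\,dz$) yields $\int_{\RR^d}|v|^2\,\mu_t(dv)=\Delta_x g(0,0)+d/t$, so $\sup_{t\ge 1}\int|v|^2\,\mu_t(dv)=\Delta_x g(0,0)+d<\infty$. Chebyshev's inequality then gives tightness, and by Prokhorov's theorem there are $t_n\to\infty$ and a probability measure $\mu$ on $\RR^d$ with $\mu_{t_n}\Rightarrow\mu$. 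For each fixed $s>0$ and $x\in\RR^d$ the function $v\mapsto\exp(v\cdot x-\tfrac12|v|^2 s)$ is bounded and continuous, so weak convergence together with the limit above gives $\int_{\RR^d}\exp(v\cdot x-\tfrac12|v|^2 s)\,\mu(dv)=g(s,x)$ for all $s>0$, $x\in\RR^d$. Taking $x=X_t$ then yields, for $t>0$,
\[
M_t=g(t,X_t)=\int_{\RR^d}\exp\!\big(v\cdot X_t-\tfrac12|v|^2 t\big)\,\mu(dv)=\int_{\RR^d}\EEE(v\cdot X)_t\,\mu(dv)\qquad P\text{-a.s.},
\]
while for $t=0$ both sides equal $1$, which is the asserted representation.

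The genuinely delicate ingredients are analytic rather than combinatorial. First, one must justify that $g$ is smooth on $(0,\infty)\times\RR^d$ and satisfies the semigroup relation $g(s,\cdot)=P_{t-s}g(t,\cdot)$ (a consequence of the martingale property and the smoothing of the Gaussian semigroup), which I would cite. Second, the second-moment identity uses that $\Delta_x g(0,0)$ is finite, i.e. mild regularity of $g$ near the origin. Third, one should verify that $\int\EEE(v\cdot X)_t\,\mu(dv)$ is itself a martingale (it is, being a mixture of exponential martingales, by Fubini). I expect the step that converts the pointwise limit of $\int\exp(v\cdot x-\tfrac12|v|^2 s)\,\mu_t(dv)$ into genuine weak convergence of $\mu_t$ — that is, ruling out escape of mass to infinity — to be the main obstacle, and the quadratic-moment identity $\int|v|^2\,\mu_t(dv)=\Delta_x g(0,0)+d/t$ is precisely the device I would use to overcome it.
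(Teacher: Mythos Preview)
Your construction is essentially the paper's: your $\mu_t$ is precisely the law of $X_t/t$ under the probability measure $Q$ with density process $M$, and both proofs obtain $\mu$ as the weak limit of these laws as $t\to\infty$. The substantive difference is the tool used to pass to the limit. The paper works with characteristic functions: taking $f(x)=e^{iu\cdot x}$ in the relation $E_Q[f(X_t/t)]=E_Q[\int f(X_s/s+\sqrt{1/t-1/s}\,x)\,\phi(x)\,dx]$ yields
\[
\varphi^Q_{X_s/s}(u)=\varphi^Q_{X_1}(u)\,e^{\frac12(1-1/s)|u|^2}\xrightarrow[s\to\infty]{}\varphi^Q_{X_1}(u)\,e^{|u|^2/2},
\]
and since this limit is continuous at $0$, L\'evy's continuity theorem delivers weak convergence directly, with tightness built in. You instead use the Laplace-type identity and Prokhorov, which forces you to prove tightness by hand.

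That is where the gap lies. Your proposed tightness device, the identity $\int|v|^2\,\mu_t(dv)=\Delta_x g(0,0)+d/t$, requires $g$ to be twice differentiable in $x$ at $(0,0)$; the hypotheses give only $g(0,0)=1$ and no regularity of $g$ at the time origin (indeed $g(0,\cdot)$ is not even defined away from $0$, since $X_0=0$). Your parenthetical ``direct Brownian-bridge computation'' does not rescue this: it amounts to computing $t^{-2}E_Q[|X_t|^2]$, and if the eventual $\mu$ has infinite second moment---which the theorem certainly allows---then $\int|v|^2\,\mu_t(dv)\to\infty$ and the bound fails. So the second-moment route is not merely unjustified, it is wrong in general.

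Two fixes are available. The cleanest is to switch to characteristic functions as the paper does, letting L\'evy's theorem absorb the tightness issue. Alternatively, stay with your identity but use it at a fixed $s_0>0$ and $x=0$: it gives $\int e^{-s_0|v|^2/2}\,\mu_t(dv)=(t/(t+s_0))^{d/2}g(ts_0/(t+s_0),0)\to g(s_0,0)$, and since $g(s,0)\to 1$ as $s\downarrow 0$ (by the semigroup relation and dominated convergence), one can make $\inf_{t\ge T_0}\int e^{-s_0|v|^2/2}\,\mu_t(dv)$ as close to $1$ as desired, which yields tightness without any moment assumption.
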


\begin{proof}

Let $f: \RR^d \to \CC$ be a bounded, continuous map. Then, for $s \ge t$ one has that
\[E\left[M_t f\left(\frac{X_t}{t}\right)\right]=E\left[M_s f\left(\frac{X_t}{t} \right)\right] =E\left[M_s f\left(\frac{X_s}{s}+\left(\frac{X_t}{t} - \frac{X_s}{s}\right)\right)\right].\]

Note that the random variables $\left(\frac{X_t}{t} - \frac{X_s}{s}\right)$ and $\frac{X_s}{s}$ are independent as they are uncorrelated in a Gaussian space. Therefore, by conditioning on $\frac{X_s}{s}$ one gets that
\[E\left[M_t f\left(\frac{X_t}{t}\right)\right]=E\left[M_s \int_{\RR^d}f\left(\frac{X_s}{s}+ \sqrt{\frac{1}{t}-\frac{1}{s}} x\right) \frac{\exp (-|x|^2/2)}{(2 \pi)^{d/2}} d x\right].\]

We can now proceed with the construction of the measure $\mu$. By taking $f(x):=\exp( i u \cdot x)$, the previous equality yields that
\[E\left[M_t \exp\left( i u \cdot \frac{X_t}{t}\right)\right]=E\left[M_s \exp\left(i u \cdot\frac{X_s}{s}\right)\right] \exp\left(-\frac{1}{2}\left(\frac{1}{t}-\frac{1}{s}\right) |u|^2\right).\]

On the other hand, the process $(M_t)_{t \ge 0}$ is by assumption a density process, and is therefore associated to a measure $Q$ on $C(\RR^+,\RR^d)$ with $Q\big|_{\FFF_t} << P\big|_{\FFF_t}$ via the Radon-Nikodym theorem. This gives us that
\[E_Q\left[\exp\left( i u \cdot \frac{X_t}{t}\right)\right]=E_Q\left[\exp\left(i u \cdot \frac{X_s}{s}\right)\right] \exp\left(-\frac{1}{2}\left(\frac{1}{t}-\frac{1}{s}\right) |u|^2\right).\]
By taking $t=1$, this implies in particular that
\[\varphi_{X_1}^Q(u)=\varphi_{\frac{X_s}{s}}^Q(u)\exp\left(-\frac{1}{2}\left(1-\frac{1}{s}\right) |u|^2\right),\]
where $\varphi_{Y}^Q$ denotes the characteristic function of the random variable $Y$ under the measure $Q$.

As a consequence, we get that $\varphi_{\frac{X_s}{s}}^Q(u)$ converges pointwise for $s \to \infty$ to a continuous function $\varphi_{X_1}^Q(u) \exp\left(\frac{1}{2} |u|^2\right)$. Therefore, L\'evy's continuity theorem yields the existence of a measure $\mu$ such that $\frac{X_s}{s}$ converges weakly to $\mu$ under $Q$ as $s \to \infty$.

We now have to check that $\mu$ satisfies the desired property. By the above convergence in distribution, we have that, for any $f$ bounded and continuous and for any fixed $t> 0$,
{\small
\[
E_Q\!\left[\int_{\RR^d} \! f\!\left(\frac{X_s}{s}+\sqrt{\frac{1}{t}-\frac{1}{s}} x\right) \frac{e^{-|x|^2/2}}{(2 \pi)^{d/2}} d x\right] \to \int_{\RR^d}\!\int_{\RR^d}\! f\!\left(\!v+\sqrt{\frac{1}{t}} x\right) \frac{e^{-|x|^2/2}}{(2 \pi)^{d/2}} d x \ \mu (dv)
\]
}
when $s \to \infty$. Since the left hand side equals $E\left[M_t f\left(\frac{X_t}{t}\right)\right]$ for any $s \ge t$, we get that
\[E\left[M_t f\left(\frac{X_t}{t}\right)\right]=\int_{\RR^d}\int_{\RR^d} f\left(v+\sqrt{\frac{1}{t}} x\right) \frac{e^{-|x|^2/2}}{(2 \pi)^{d/2}} d x \ \mu (dv).\]

On the other hand, by using Girsanov's theorem one can easily check that, for any $f$ bounded and continuous,
\begin{align*}
E\left[\int_{\RR^d} \EEE(v X)_t \ \mu (d v) f\left(\frac{X_t}{t}\right)\right] 
&= \int_{\RR^d} E\left[\EEE(v X)_t  \ f\left(\frac{X_t}{t}\right)\right] \mu (d v) \\ 
&= \int_{\RR^d} E\left[f\left(\frac{X_t}{t}+ v\right)\right] \mu(d v) \\ 
&= \int_{\RR^d} \int_{\RR^d}f\left(v+\sqrt{\frac{1}{t}} x\right) \frac{e^{-|x|^2/2}}{(2 \pi)^{d/2}} d x \ \mu (dv).
\end{align*}

By approximation arguments, this equality can then be extended to any $f$ bounded and measurable, obtaining the desired representation for $M_t$.
\end{proof}

The condition that $g(t,X_t)$ forms a martingale on the whole interval $[0,\infty)$ is essential. This can easily be shown by separation arguments: let $d=1$, $T >0$, and denote by ${\mathcal K}$ the set of all the martingales having the desired representation, i.e.
\begin{align*}
{\mathcal K}=\big\{N=(N_t)_{0 \le t \le T} \ \big| &\text{ There is a probability measure } \mu \text{ such that } \\
& N_t=\int_\RR \EEE(v X)_t \ \mu (d v), 0 \le t \le T \big\}.
\end{align*}
Clearly, ${\mathcal K}$ is a convex set. Now, for an arbitrary $f \in L^\infty(\RR^d)$ and any $N \in {\mathcal K}$ we have that
\begin{align*}
E\left[f(X_t)N_t\right]&=\int_{\Omega}\int_\RR f(X_t)\EEE(v X)_t \ \mu (d v) d P \\
&= \int_\RR E\left[f(X_t)\EEE(v X)_t\right] \mu (d v) \\
&= \int_\RR E\left[f(X_t+ v t)\right] \mu (d v).
\end{align*}
The choice $f_k(x):= \sin(k x)$ gives
\begin{align*}
E\left[f_k(X_t)N_t\right]&=\int_\RR E\left[\sin(k X_t+ k v t)\right] \mu (d v) \\ &= \int_\RR E\left[\cos(k X_t)\right]\sin(k v t) \mu (d v) \\ 
&\ge E\left[\cos(k X_t)\right] \inf_{v \in \RR} \sin(k v t) = - e^{-k^2 t/2}.
\end{align*}
However, by defining 
\[
M_T=\frac{\mathds{1}_{\{\sin(k X_T)<- e^{-k^2 T/2}\}}}{P(\sin(k X_T)<- e^{-k^2 T/2})}
\]
and by setting $M_t:=E[M_T|\FFF_t]=g(t,X_t)$, we get a martingale on $\left[0,T\right]$ of the desired form and such that $E\left[f_k(X_t)M_t\right] < - e^{-k^2 t/2}$. This proves the claim.

On the other hand, \autoref{widder_original} can easily be extended to any continuous $L^1$-bounded Brownian martingale on $[0,\infty)$. First of all, we recall the well known Krickeberg decomposition for $L^1$-bounded martingales:
\begin{thm}
Let $M$ denote a martingale on $[0, \infty)$. Then $M$ is $L^1$-bounded if and only if it can be written ($P$-a.s.) as the difference of two positive martingales $M^1$, $M^2$. Moreover, one can choose $M^1$, $M^2$ so that
\[
\sup_{t \ge 0} \|M_t\|_1=E[M^1_0]+E[M^2_0],
\] 
and the decomposition is then given by
\[
M_t^1=\sup_{s \ge t} E[M_s^+|\FFF_t], \quad M_t^2=\sup_{s \ge t} E[M_s^-|\FFF_t] \quad P\text{-a.s.}, \quad t \ge 0.
\]
\end{thm}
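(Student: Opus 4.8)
The plan is to treat the ``only if'' implication together with the norm inequality first, and then to verify directly that the two displayed formulas produce a decomposition attaining equality. For the easy direction, suppose $M=M^1-M^2$ with $M^1,M^2$ positive martingales. Then $|M_t|\le M^1_t+M^2_t$ pointwise, so for every $t\ge 0$ one has $\|M_t\|_1\le E[M^1_t]+E[M^2_t]=E[M^1_0]+E[M^2_0]$, since a positive martingale has constant expectation; hence $M$ is $L^1$-bounded and $\sup_{t\ge0}\|M_t\|_1\le E[M^1_0]+E[M^2_0]$. In particular, any decomposition attaining equality here is automatically one of minimal total mass, which is what the theorem asserts for the explicit choice below.

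For the converse, assume $C:=\sup_{t\ge0}\|M_t\|_1<\infty$. The key observation is that both $M^+=(M^+_t)$ and $M^-=(M^-_t)$ are submartingales: applying the conditional Jensen inequality to the convex map $x\mapsto x^+$ gives $E[M^+_{s'}\,|\,\FFF_s]\ge (E[M_{s'}\,|\,\FFF_s])^+=M^+_s$ for $s\le s'$, and the same argument applied to the martingale $-M$ handles $M^-$. Consequently, for each fixed $t$ the map $s\mapsto E[M^+_s\,|\,\FFF_t]$ is nondecreasing on $[t,\infty)$ (reduce to $s,s'\ge t$ by the tower property), so the supremum in $M^1_t:=\sup_{s\ge t}E[M^+_s\,|\,\FFF_t]$ is an increasing limit as $s\to\infty$, and likewise $M^2_t:=\sup_{s\ge t}E[M^-_s\,|\,\FFF_t]=\lim_{s\to\infty}E[M^-_s\,|\,\FFF_t]$.

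It then remains to check three things. First, $M^1$ and $M^2$ are genuine positive martingales: monotone convergence gives $E[M^1_t]=\lim_{s\to\infty}E[M^+_s]=\sup_{s}E[M^+_s]\le C<\infty$, so each $M^1_t$ is integrable (hence a.s.\ finite) and obviously nonnegative, and for $t\le t'$ the conditional monotone convergence theorem yields $E[M^1_{t'}\,|\,\FFF_t]=\lim_{s\to\infty}E\big[E[M^+_s\,|\,\FFF_{t'}]\,\big|\,\FFF_t\big]=\lim_{s\to\infty}E[M^+_s\,|\,\FFF_t]=M^1_t$; the same applies to $M^2$. Second, $M=M^1-M^2$: since both limits exist and are finite, $M^1_t-M^2_t=\lim_{s\to\infty}E[M^+_s-M^-_s\,|\,\FFF_t]=\lim_{s\to\infty}E[M_s\,|\,\FFF_t]=M_t$, using that $M$ is itself a martingale. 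Third, the norm identity: because $s\mapsto E[M^+_s]$ and $s\mapsto E[M^-_s]$ are both nondecreasing, $E[M^1_0]+E[M^2_0]=\lim_{s}E[M^+_s]+\lim_{s}E[M^-_s]=\lim_{s}\big(E[M^+_s]+E[M^-_s]\big)=\sup_{s\ge0}\|M_s\|_1$.

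There is no serious obstacle here: the only points requiring care are the several interchanges of limits with (conditional) expectations, and each of them is legitimate precisely because the submartingale monotonicity reduces it to an instance of (conditional) monotone convergence. If one additionally wants $M^1$ and $M^2$ to be càdlàg — natural when $M$ is and the filtration satisfies the usual conditions — one invokes the standard regularization of supermartingale-type processes, which does not touch any of the a.s.\ identities above; I expect this bookkeeping, rather than any genuine difficulty, to be the only remaining thing to spell out.
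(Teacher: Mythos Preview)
Your argument is correct and is the standard proof of Krickeberg's decomposition. The paper does not supply its own proof of this theorem but simply refers the reader to Dellacherie and Meyer, so there is no in-paper argument to compare against; the route you take is the classical one found there.
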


The proof of this well known result can be found for instance in \cite{DelMeyBook}. Krickeberg's decomposition allows us to extend Widder's representation theorem, obtaining the following result:

\begin{prop}\label{Krickeberg}
Let $(M_t)_{t \ge 0}$ be a continuous, $L^1$-bounded martingale of the form $M_t=g(t,X_t)$. Then there is a signed measure $\mu$ on $\RR^d$ such that
\[
M_t=\int_{\RR^d} \EEE(v \cdot X)_t \ \mu (d v) \quad P\text{-a.s.} \text{ for all } t \ge 0.
\]
Moreover, we have that
\[
\sup_{t \ge 0} \|M_t\|_1 = \|\mu\|.
\]
\end{prop}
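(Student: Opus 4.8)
The plan is to reduce the claim to the positive case already established in \autoref{widder_original} by means of the Krickeberg decomposition recalled above. Since $M_t=g(t,X_t)$ is $\FFF^X_t$-measurable, it is automatically a martingale with respect to the augmented filtration $\FF^X$ generated by $X$ (whatever ambient filtration it is a martingale for), so I work throughout with $\FF^X$. Applying the Krickeberg decomposition to $M$ produces positive martingales
\[
M^1_t=\sup_{s\ge t}E[M_s^+|\FFF^X_t],\qquad M^2_t=\sup_{s\ge t}E[M_s^-|\FFF^X_t],
\]
with $M=M^1-M^2$ $P$-a.s.\ and $\sup_{t\ge0}\|M_t\|_1=E[M^1_0]+E[M^2_0]=:c_1+c_2$.

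The heart of the argument is to check that $M^1$ and $M^2$ again belong to the class of continuous martingales of the form $g(t,X_t)$, so that \autoref{widder_original} applies to them. Since $x\mapsto x^+$ and $x\mapsto x^-$ are convex, $M^+$ and $M^-$ are submartingales, hence for fixed $t$ the map $s\mapsto E[M_s^\pm|\FFF^X_t]$ is nondecreasing on $[t,\infty)$ and the suprema above are monotone limits as $s\to\infty$ along $s\in\NN$. By the Markov property of $X$ one has $E[\varphi(X_s)|\FFF^X_t]=(P_{s-t}\varphi)(X_t)$, which is $\sigma(X_t)$-measurable for each fixed $s\ge t$; so each $M^i_t$, being an increasing limit of $\sigma(X_t)$-measurable random variables, is itself $\sigma(X_t)$-measurable, i.e.\ $M^i_t=g^i(t,X_t)$. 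Moreover, as a martingale with respect to a Brownian filtration, $M^i$ admits a continuous modification, which I use henceforth. By Blumenthal's $0$-$1$ law the constant $c_i=M^i_0$ is well defined; if $c_i>0$, \autoref{widder_original} applied to $c_i^{-1}M^i$ yields a probability measure $\mu^i$ on $\RR^d$ with $M^i_t=\int_{\RR^d}\EEE(v \cdot X)_t\,(c_i\mu^i)(dv)$, whereas if $c_i=0$ then $M^i\equiv0$ and I put $\mu^i:=0$. Setting $\mu:=c_1\mu^1-c_2\mu^2$, a finite signed measure on $\RR^d$, we obtain $M_t=M^1_t-M^2_t=\int_{\RR^d}\EEE(v \cdot X)_t\,\mu(dv)$ $P$-a.s.\ for every $t\ge0$.

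For the norm identity I would combine two opposite estimates. On one hand, by construction $\|\mu\|\le c_1\|\mu^1\|+c_2\|\mu^2\|\le c_1+c_2=\sup_{t\ge0}\|M_t\|_1$. On the other hand, the representation gives $|M_t|\le\int_{\RR^d}\EEE(v \cdot X)_t\,|\mu|(dv)$ $P$-a.s.\ for each $t$; taking expectations, applying Tonelli's theorem, and using $E[\EEE(v \cdot X)_t]=1$ for every $v\in\RR^d$, one gets $\|M_t\|_1\le\int_{\RR^d}E[\EEE(v \cdot X)_t]\,|\mu|(dv)=\|\mu\|$, hence $\sup_{t\ge0}\|M_t\|_1\le\|\mu\|$. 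Together these yield $\sup_{t\ge0}\|M_t\|_1=\|\mu\|$.

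The main obstacle I anticipate is the second step: ensuring that the Krickeberg components $M^1,M^2$ remain continuous martingales of the form $g(t,X_t)$, which is exactly what licenses the use of \autoref{widder_original}. The key is the submartingale property of $M^+$ and $M^-$, which converts the defining suprema into monotone limits and thereby propagates both the $\sigma(X_t)$-measurability (via the Markov property) and the integrability. The remaining ingredients — the reduction to $\FF^X$, the normalization, and the total-variation bounds — are routine.
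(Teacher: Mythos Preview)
Your proposal is correct and follows essentially the same route as the paper: apply the Krickeberg decomposition, observe that each component $M^i$ is again of the form $g^i(t,X_t)$ (since $E[M_s^\pm\mid\FFF_t]=E[M_s^\pm\mid X_t]$), invoke \autoref{widder_original} on the normalized pieces, and combine the two total-variation inequalities to get $\sup_t\|M_t\|_1=\|\mu\|$. Your write-up is in fact slightly more careful than the paper's in handling the monotone-limit structure of the supremum, the continuity of $M^i$, and the degenerate case $c_i=0$.
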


\begin{proof}

We apply the Krickeberg decomposition to $M$, obtaining two positive martingales $M^1$, $M^2$ such that $M=M^1-M^2$ and $\sup_{t \ge 0} \|M_t\|_1=M^1_0+M^2_0$.

By construction, $M^i$ is of the form $M^i=f^i(t,X_t)$: indeed,
\[
M_t^1=\sup_{s \ge t} E[M_s^+|\FFF_t]=\sup_{s \ge t} E[M_s^+|X_t],
\]
since $M_s^+$ is $\sigma(X_s)$-measurable. We can assume, without loss of generality, that $M^i_0 \neq 0$. Then, we can apply Widder's representation to the positive martingales $\frac{M^1}{M^1_0}$, $\frac{M^2}{M^2_0}$, obtaining two probability measures $\hat{\mu}^1$, $\hat{\mu}^2$ such that
\[
\frac{M^i_t}{M^i_0}=\int_{\RR^d} \EEE(v \cdot X)_t \ \hat{\mu}^i (d v)\quad P\text{-a.s.}.
\]
We thus set $\mu^i:= M^i_0 \hat{\mu}^i$ and $\mu:=\mu^1- \mu^2$. Then, for all $t$,
\[
M_t=M_t^1 - M_t^2=\int_{\RR^d} \EEE(v \cdot X)_t \ \mu (d v)\quad P\text{-a.s.}.
\]
On the other hand, we have that
\[
\|M_t\|_1 \le \int_\Omega \int_{\RR^d} \EEE(v \cdot X)_t \ |\mu| (d v) d P = \int_{\RR^d} E\left[\EEE(v \cdot X)_t\right] \ |\mu| (d v) = \|\mu\|, 
\]
so that we finally get that
\[
\|\mu^1\|+\|\mu^2\|=M_0^1+M_0^2=\sup_{t \ge 0} \|M_t\|_1 \le \|\mu\| \le \|\mu^1\|+\|\mu^2\|.
\qedhere
\]
\end{proof}

\section{A characterization of Widder's measure}

We now present the aforementioned characterization of the measure $\mu$ appearing in \autoref{Krickeberg}. To the best of our knowledge, this characterization is new and shows interesting analogies with results from Fourier analysis.

\begin{thm}\label{widder_characterization}
Let $(g(t,X_t))_{t \ge 0}$ be a continuous $L^1$-bounded martingale on $[0,\infty)$ with Widder's representation $g(t,X_t)=\int_{\RR^d} \EEE(v \cdot X)_t \ \mu (d v)$. Then, the following assertions hold:
\begin{enumerate} 
\item If the measure $|\mu|$ has quadratic exponential moments of all orders, i.e. 
\[
\int_{\RR^d} e^{\lambda |v|^2}|\mu| (d v) < \infty \text{ for all } \lambda > 0,
\]
then there is a family $(b_\alpha)_{\alpha \in \NN^d}$ of coefficients in $\RR$ such that, for all $t \ge 0$, the series $\sum_{\alpha \in \NN^d} b_\alpha H_\alpha(t,X_t)$ converges absolutely to $g(t,X_t)$ in $L^1$, and therefore in $L^p$ for all $p > 1$. The coefficients $b_\alpha$ can be represented as
\[
b_\alpha=\frac{1}{\alpha!}\int_{\RR^d} v^\alpha \ \mu (d v), \quad \alpha \in \NN^d.
\]
 \item Conversely, if $\mu$ is positive and there is a family $(b_\alpha)_{\alpha \in \NN^d}$ such that the series $\sum_{\alpha \in \NN^d} b_\alpha H_\alpha(t,X_t)$ converges, for some rearrangement of the terms, to $g(t,X_t)$ in $L^1$ for all $t \ge 0$, then $\mu$ has quadratic exponential moments of all orders. Moreover, the function $f(z):=\int_{\RR^d} e^{v \cdot z}\mu (d v)$, $z \in \CC^d$, is well defined, analytic of order $2$, and can be represented, for $z \in \CC^d$ and $t > 0$, as
\[
f(z)=\sum_{\alpha \in \NN^d} b_\alpha z^\alpha=E\bigg[g(t,X_t) \exp \bigg(\frac{1}{t} \Big(z \cdot X_t - \frac{z^T z}{2}\Big)\bigg) \bigg].
\]
\end{enumerate}
\end{thm}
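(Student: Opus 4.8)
The plan is to treat the two implications separately, reducing both to the generating-function identity $\EEE(v\cdot X)_t=\sum_{\alpha\in\NN^d}\frac{v^\alpha}{\alpha!}H_\alpha(t,X_t)$ (which follows either from the coordinatewise Hermite generating function $e^{xz-tz^2/2}=\sum_n\frac{z^n}{n!}H_n(t,x)$, or by expanding $e^{v\cdot Z_t}$ and applying \autoref{powersonherm}), to the $L^1$/$L^2$ Hermite estimates recalled in Section~3, and to the elementary bound $\sum_{\alpha\in\NN^d}s^{|\alpha|}/\sqrt{\alpha!}\le C_de^{c_ds^2}$.

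For part~(i), I set $b_\alpha:=\frac1{\alpha!}\int_{\RR^d}v^\alpha\,\mu(dv)$, which is a well-defined real number since $|\mu|$ having quadratic exponential moments controls all polynomial moments. Using $\|H_\alpha(t,X_t)\|_1\le\|H_\alpha(t,X_t)\|_2=(\alpha!t^{|\alpha|})^{1/2}$ and $|v^\alpha|\le|v|^{|\alpha|}$,
\[
\sum_{\alpha}|b_\alpha|\,\|H_\alpha(t,X_t)\|_1\le\int_{\RR^d}\sum_\alpha\frac{(|v|\sqrt t\,)^{|\alpha|}}{\sqrt{\alpha!}}\,|\mu|(dv)\le C_d\int_{\RR^d}e^{c_d|v|^2t}\,|\mu|(dv)<\infty,
\]
so the series converges absolutely in $L^1$ for every $t$. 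To identify the limit with $g(t,X_t)$ I estimate, for finite $F\subseteq\NN^d$, using Widder's representation and the generating-function identity,
\[
\Big\|g(t,X_t)-\sum_{\alpha\in F}b_\alpha H_\alpha(t,X_t)\Big\|_1=\Big\|\int_{\RR^d}\sum_{\alpha\notin F}\tfrac{v^\alpha}{\alpha!}H_\alpha(t,X_t)\,\mu(dv)\Big\|_1\le\sum_{\alpha\notin F}\int_{\RR^d}\tfrac{|v|^{|\alpha|}}{\alpha!}\|H_\alpha(t,X_t)\|_1|\mu|(dv),
\]
which tends to $0$ as $F\uparrow\NN^d$. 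Then \autoref{cor_hermseries} gives absolute convergence in $L^p$ for every $p>1$. This direction is essentially bookkeeping and presents no real difficulty.

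For part~(ii), note first that by \autoref{cor_hermseries} the hypothesis upgrades to absolute convergence in every $L^p$ for every $t$; in particular $g(t,X_t)\in L^p$ for all $t,p$, and $L^2$-orthogonality of the Hermite system forces $b_\alpha=E[g(t,X_t)H_\alpha(t,X_t)]/(\alpha!t^{|\alpha|})$, so these are exactly the coefficients of \autoref{extension_Lp}. Since $(g(t,X_t))_{t\ge0}$ is thereby an $L^p$-martingale for every $p>1$, \autoref{extension_Lp} applies: $F(z):=\sum_\alpha b_\alpha z^\alpha$ is entire and $F(z)=E[g(t,X_t)\exp(\tfrac1t(z\cdot X_t-\tfrac{z^Tz}{2}))]$ for all $z\in\CC^d$, $t>0$. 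Now insert Widder's representation $g(t,X_t)=\int\EEE(v\cdot X)_t\mu(dv)$ into this expectation; for real $z=x$ the integrand is nonnegative (recall $\mu\ge0$), so \emph{Tonelli}'s theorem — which is what keeps the argument non-circular — gives $F(x)=\int_{\RR^d}E[\EEE(v\cdot X)_t\,e^{(x/t)\cdot X_t-|x|^2/(2t)}]\,\mu(dv)=\int_{\RR^d}e^{v\cdot x}\,\mu(dv)<\infty$, the inner expectation being computed from the Gaussian moment generating function. Next I use the \emph{lower} bound $\|H_\alpha(t,X_t)\|_1\ge e^{-|\alpha|/2}(\alpha!t^{|\alpha|})^{1/2}$, valid for all $t$: from $\sum_\alpha|b_\alpha|\|H_\alpha(t,X_t)\|_1<\infty$ for every $t$ one deduces $\sum_\alpha|b_\alpha|\sqrt{\alpha!}\,s^{|\alpha|}<\infty$ for every $s>0$, hence (with the elementary bound above and $r$ large) $|F(z)|\le C_\eps e^{\eps|z|^2}$ for \emph{every} $\eps>0$. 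Combining, $\int e^{v\cdot x}\mu(dv)\le C_\eps e^{\eps|x|^2}$ for all $x$ and all $\eps$; a Chernoff bound in each direction $\theta$ plus a union over a fixed finite net of directions on the unit sphere yields $\mu(\{|v|\ge s\})\le C'_\kappa e^{-\kappa s^2}$ for every $\kappa>0$, i.e.\ $\mu$ has quadratic exponential moments of all orders. Finally, with $\mu$ enjoying these moments, $z\mapsto\int e^{v\cdot z}\mu(dv)$ is well-defined and entire on $\CC^d$ with Taylor coefficients $\frac1{\alpha!}\int v^\alpha\mu(dv)$ (differentiating under the integral); since it agrees with $F$ on $\RR^d$ it agrees on $\CC^d$, giving $b_\alpha=\frac1{\alpha!}\int v^\alpha\mu(dv)$ and $f(z):=\int e^{v\cdot z}\mu(dv)=\sum_\alpha b_\alpha z^\alpha=E[g(t,X_t)\exp(\tfrac1t(z\cdot X_t-\tfrac{z^Tz}{2}))]$, with order-$2$ analyticity from $|f(z)|\le e^{|z|^2/2}\int e^{|v|^2/2}\mu(dv)$.

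The main obstacle is the production of the quadratic exponential moments in~(ii): one has to notice that analyticity of order $2$ by itself only yields Gaussian tails of a fixed rate, and that the decisive extra input is the minimal-type estimate $|F(z)|\le C_\eps e^{\eps|z|^2}$, which is forced precisely by the lower bound on $\|H_\alpha(t,X_t)\|_1$ holding simultaneously for all $t$. Arranging the Tonelli interchange so that it does not presuppose the finiteness being proved is the second point requiring care.
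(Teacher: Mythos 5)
Your proposal is correct, and for part (ii) it takes a genuinely different route from the paper. For part (i) the two arguments are close in spirit but organized differently: you define $b_\alpha=\frac{1}{\alpha!}\int v^\alpha\,\mu(dv)$ up front and prove absolute $L^1$-convergence directly from $\|H_\alpha(t,X_t)\|_1\le(\alpha!\,t^{|\alpha|})^{1/2}$, the bound $\sum_\alpha s^{|\alpha|}/\sqrt{\alpha!}\le C_d e^{c_d s^2}$ and the generating-function identity for $\EEE(v\cdot X)_t$; the paper instead notes that the quadratic exponential moments force $g(t,X_t)\in L^2$ for every $t$, reads off $b_\alpha$ from the orthogonal expansion, and computes it by Fubini, leaving the absolute $L^1$/$L^p$ convergence to be supplied by \autoref{hermseries} applied at a larger time --- your version is the more self-contained of the two. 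The real divergence is in part (ii). The paper never leaves the real line: from $E[g(t,X_t)^2]=\int\!\int e^{t(v\cdot w)}\mu(dv)\mu(dw)<\infty$ for all $t$ it applies Markov's inequality to $\mu\otimes\mu$ on the sets $\{v\cdot w>K\}$ and then a sign-pattern ``diagonal'' argument (the sets $\AAA_{K,i,(\eps_1,\dots,\eps_d)}$) to square-root the product bound into $\mu(|v|^2>K)\le d\,2^d\,C_t^{1/2}e^{-tK/(2d)}$, with $t$ arbitrary. You instead pass through the entire function $F=\sum_\alpha b_\alpha z^\alpha$: \autoref{extension_Lp} plus Tonelli identifies $F|_{\RR^d}$ with the Laplace transform of $\mu$, the lower bound $\|H_\alpha(t,X_t)\|_1\ge e^{-|\alpha|/2}(\alpha!\,t^{|\alpha|})^{1/2}$ valid for all $t$ forces the minimal-type estimate $|F(z)|\le C_\eps e^{\eps|z|^2}$, and a Chernoff bound in finitely many directions finishes. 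Both arguments are sound; the paper's is more elementary (it needs only the $L^2$ identity and no complex analysis), while yours has the advantage of simultaneously establishing the ``Moreover'' clause --- the identity $f(z)=\sum_\alpha b_\alpha z^\alpha=E[g(t,X_t)\exp(\frac1t(z\cdot X_t-\frac{z^Tz}{2}))]$ and the coefficient formula --- which the paper's written proof in fact leaves implicit. Your flagged points of care (non-circularity of the Tonelli interchange, and the insufficiency of order-$2$ growth without minimal type) are exactly the right ones for your route.
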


\begin{proof}
We can assume, without loss of generality, that $|\mu|$ has total mass $1$. We first show (\textit{i}): clearly, we have that
\[
|g(t,X_t)|^2=\bigg|\int_{\RR^d} \EEE(v \cdot X)_t \ \mu (d v)\bigg|^2 \le \int_{\RR^d} \EEE(v \cdot X)_t^2 \ |\mu| (d v),
\]
so that
\[
E\big[|g(t,X_t)|^2\big] \le \int_{\RR^d} E\big[\EEE(v \cdot X)_t^2\big]\ |\mu| (d v) =
\int_{\RR^d} e^{t |v|^2}\ |\mu| (d v) < \infty
\]
for all $t \ge 0$. Hence, $g(t,X_t)$ is the limit in $L^2$ of the series $\sum_{\alpha \in \NN^d} b_\alpha H_{\alpha}(t,X_t)$, where $b_\alpha=\frac{1}{\alpha! t^{|\alpha|}} E[g(t,X_t) H_\alpha(t,X_t)]$. A simple application of Fubini's theorem then gives that
\begin{align*}
b_\alpha&=\frac{E\big[\big(\int_{\RR^d} \EEE(v \cdot X)_t \ \mu (d v) \big) H_\alpha(t,X_t)\big]}{\alpha! t^{|\alpha|}} \\
&=\frac{1}{\alpha! t^{|\alpha|}} \int_{\RR^d}  E[ \EEE(v \cdot X)_t H_\alpha(t,X_t)] \ \mu (d v) =\frac{1}{\alpha!} \int_{\RR^d} v^\alpha \ \mu (d v). 
\end{align*}

We now prove the second implication. Since the series $\sum_{\alpha \in \NN^d} b_\alpha H_\alpha(t,X_t)$ converges for some rearrangement to $g(t,X_t)$ in $L^1$, \autoref{cor_hermseries} implies that the same series converges absolutely in $L^2$ to $g(t,X_t)$ for all $t \ge 0$. Hence, we get that
\[
\infty > E\big[g(t,X_t)^2\big]=E\bigg[\bigg(\int_{\RR^d} \EEE(v \cdot X)_t \mu (d v)\bigg)^2\bigg]=\int_{\RR^d} \int_{\RR^d} e^{(v \cdot w) t} \mu(d v) \mu(d w)
\]
for all $t \ge 0$. This is enough to show the existence of quadratic exponential moments of $\mu$: indeed, by setting $C_t:=\|g(t,X_t)\|_2$ we obtain that, for all $K >0$,
\begin{equation}\label{mu_v_w}
(\mu \otimes \mu)\big(v \cdot w > K\big) \le C_t \exp(- t K).
\end{equation}
On the other hand it is easy to verify that, for $i \in \{1,\cdots, d\}$ and $(\eps_1,\cdots, \eps_d) \in \{-1,1\}^d$, the set
\begin{align*}
\AAA_{K,i,(\eps_1,\cdots, \eps_d)}:=&\{v\in \RR^d | \, |v_i|> \sqrt{K}, \;  \mathrm{sign}(v_j)=\eps_j \; \forall \ j \} \\
&\times \{w\in \RR^d | \, |w_i|> \sqrt{K}, \; \mathrm{sign}(w_j)=\eps_j \; \forall \ j \}
\end{align*}
is a subset of $\{(v,w) \in \RR^d \times \RR^d \, | \, v\cdot w > K\}$. Thus, \eqref{mu_v_w} implies that, for $i \in \{1,\cdots, d\}$ and $(\eps_1,\cdots, \eps_d) \in \{-1,1\}^d$,
\begin{equation}\label{mu_v}
\mu\big(\{v\in \RR^d | \, |v_i|> \sqrt{K}, \;  \mathrm{sign}(v_j)=\eps_j \; \forall \ j \}\big) \le C_t^{1/2} \exp\bigg(- \frac{t}{2} \, K\bigg).
\end{equation}
Now, if $v \in \RR^d$ is such that $|v|^2 > K$, then there is an $i \in \{1,\cdots, d\}$ such that $|v_i|>\sqrt{K/d} \, $: hence, \eqref{mu_v} gives that
\[
\mu\big(|v|^2 > K\big) \le d \, 2^d \, C_t^{1/2} \exp\bigg(- \frac{t}{2 d} \, K\bigg).
\]
Since $t \ge 0$ is arbitrary, this finally implies for all $\lambda \ge 0$ that
\[
\int_{\RR^d} e^{\lambda |v|^2} \mu (d v) < \infty. \qedhere
\]
\end{proof}

We conclude this article with an example illustrating the fact that the quadratic exponential moments of $\mu$ are needed in order to have an expansion in Hermite series of the corresponding martingale. Let $\mu$ denote the standard Gaussian measure on $\RR$: we can then verify that, for $t \ge 0$,
\[
g(t,X_t)=\int_\RR \EEE(v \cdot X)_t \ \mu (d v) = \frac{1}{\sqrt{t+1}} \exp \bigg(\frac{X_t^2}{2(t+1)}\bigg).
\]
Since not all the quadratic exponential moments of $\mu$ exist, \autoref{widder_characterization} implies that $g(t,X_t)$ cannot be represented as an Hermite series in $L^1$. 
Moreover, it is easy to check that $g(t,X_t)$ corresponds to the counterexample introduced in a deterministic setting by Pollard \cite{Pol} in order to prove that the Hermite polynomials do not form a basis of $L^p$ for $p \neq 2$, and his conclusions can then be recovered from the results on $L^p$-convergence proved in Section $3$.

We can therefore observe that \autoref{widder_characterization} gives a full explanation of Pollard's counterexample: namely, the non-convergence of the corresponding Hermite series is simply due to the non-existence of quadratic exponential moments of any order for the corresponding Widder's measure. This observation allows us to construct several other counterexamples whose Hermite series do not converge in $L^1$: after choosing a measure on $\RR$ which does not have quadratic exponential moments of all orders, it suffices to consider the martingale given by the corresponding Widder representation. 

Recently, there was a renewed interest in Widder's representation in connection with boundary crossing problems for Brownian motion, see Alili and Patie \cite{AliPat}.



\begin{thebibliography}{10}

\bibitem{AliPat}
L.~Alili and P.~Patie: Boundary crossing identities for {B}rownian motion and
  some nonlinear {ODE}s.
\newblock \emph{Working article. \emph{{W}arwick {U}niversity and
  {U}niversit\'e {L}ibre de {B}ruxelles}}, 2011.

\bibitem{Dav}
B.~Davis: Brownian motion and analytic functions.
\newblock \emph{Ann. Probab.}, \textbf{7}(6):913--932, 1979.

\bibitem{DelMeyBook}
C.~Dellacherie and P.-A. Meyer: \emph{Probabilities and potential {B}. Theory
  of Martingales}, volume~72 of \emph{North-Holland Mathematics Studies}.
\newblock North-Holland Publishing Co., Amsterdam, 1982.

\bibitem{GetSha}
R.~K. Getoor and M.~J. Sharpe: Conformal martingales.
\newblock \emph{Inventiones Mathematicae}, \textbf{16}:271--308, 1972.

\bibitem{Hil1}
E.~Hille: A class of reciprocal functions.
\newblock \emph{Ann. of Math. (2)}, \textbf{27}(4):427--464, 1926.

\bibitem{Hil2}
E.~Hille: Contributions to the theory of {H}ermitian series.
\newblock \emph{Duke Math. J.}, \textbf{5}:875--936, 1939.

\bibitem{Hil3}
E.~Hille: Contributions to the theory of {H}ermitian series {II}. {T}he
  representation problem.
\newblock \emph{Trans. Amer. Math. Soc.}, \textbf{47}:80--94, 1940.

\bibitem{KarShrBook}
I.~Karatzas and S.~E. Shreve: \emph{Brownian motion and stochastic calculus},
  volume 113 of \emph{Graduate Texts in Mathematics}.
\newblock Second edition. Springer-Verlag, New York, 1991.

\bibitem{Led}
M.~Ledoux: Isoperimetry and {G}aussian analysis.
\newblock In \emph{Lectures on probability theory and statistics
  ({S}aint-{F}lour, 1994)}, volume 1648 of \emph{Lecture Notes in Math.}, pages
  165--294. Springer, Berlin, 1996.

\bibitem{ManYorBook}
R.~Mansuy and M.~Yor: \emph{Aspects of {B}rownian motion}.
\newblock Universitext, Springer-Verlag, Berlin, 2008.

\bibitem{Nua}
D.~Nualart: \emph{The {M}alliavin calculus and related topics}.
\newblock Second edition. Probability and its Applications, Springer-Verlag,
  Berlin, 2006.

\bibitem{Pol}
H.~Pollard: The mean convergence of orthogonal series {II}.
\newblock \emph{Trans. Amer. Math. Soc.}, \textbf{63}:355--367, 1948.

\bibitem{ProBook}
P.~E. Protter: \emph{Stochastic integration and differential equations},
  volume~21 of \emph{Stochastic Modelling and Applied Probability}.
\newblock Second edition (version 2.1), Springer-Verlag, Berlin, 2005.

\bibitem{RevYorBook}
D.~Revuz and M.~Yor: \emph{Continuous martingales and {B}rownian motion},
  volume 293 of \emph{Grundlehren der Mathematischen Wissenschaften}.
\newblock Third edition. Springer-Verlag, Berlin, 1999.

\bibitem{Wid}
D.~V. Widder: Positive temperatures on an infinite rod.
\newblock \emph{Trans. Amer. Math. Soc.}, \textbf{55}:85--95, 1944.

\end{thebibliography}
\end{document}